\title[Maximum Gonality]{On the Maximum Gonality of a Curve over a Finite Field}
\author{Xander Faber}
\author{Jon Grantham}
\address[Faber, Grantham]{Institute for Defense Analyses \\
Center for Computing Sciences \\
17100 Science Drive \\
Bowie, MD}
\author{Everett W. Howe}
\address[Howe]{Unaffiliated mathematician\\
San Diego, CA}
\date{\today}
\begin{document}
\begin{abstract}
 The gonality of a smooth geometrically connected curve over a field
 $k$ is the smallest degree of a nonconstant $k$-morphism from the
 curve to the projective line.  In general, the gonality of a curve of
 genus $g \ge 2$ is at most $2g - 2$. Over finite fields, a result of
 F.\,K.~Schmidt from the 1930s can be used to prove that the gonality
 is at most $g+1$. Via a mixture of geometry and computation, we
 improve this bound: for a curve of genus $g \ge 5$ over a finite
 field, the gonality is at most $g$. For genus $g = 3$ and $g = 4$,
 the same result holds with exactly $217$ exceptions: There are two
 curves of genus~$4$ and gonality~$5$, and $215$ curves of genus~$3$
 and gonality~$4$. The genus-$4$ examples were found in other papers,
 and we reproduce their equations here; in supplementary material, we
 provide equations for the genus-$3$ examples.
\end{abstract}
\maketitle


\section{Introduction}
Throughout, we use the unqualified term ``curve'' to mean a smooth
complete geometrically connected variety of dimension~$1$
over a field. Let $C_{/k}$ be a curve of genus $g$ over a field
$k$. The \define{gonality} of $C$ is the minimum degree of a
nonconstant $k$-morphism $f \colon C \to \PP^1$. For the algebraically
inclined reader, the function field $\kappa(C)$ contains infinitely
many isomorphic copies of a rational function field $k(x)$, and the
gonality of $C$ is the minimum value of $[\kappa(C) \colon k(f)]$ as
$f$ varies over nonconstant functions on $C$. For the geometrically
minded reader, the gonality is the minimum degree of an effective
divisor $D$ on $C$ whose associated linear system has positive
dimension. Taking $D$ to be a canonical divisor, the Riemann--Roch
theorem shows that the gonality of $C$ is at most $2g-2$ for any curve
$C$ of genus $g \ge 2$. See \cite[Appendix]{Poonen_gonality} for
general results on gonality.

In this paper, we are interested in bounding the gonality of curves over a
finite field $\FF_q$. We will restrict our attention to curves of genus $g \ge
3$ because the gonalities of curves of genus $g = 0, 1, 2$ are known to be $1,
2, 2$, respectively. The gonality of a curve of genus $g \ge 3$ over a finite
field is at most $g+1$ by a theorem of F.\,K.~Schmidt from the 1930s
\cite[Cor.~3.1.12]{TVN_alg_geom_codes}, and it is at most $g$ if the curve
admits a rational point. See \cite[Prop.~2.1]{Faber_Grantham_GF2} for proofs of
both of these statements. For convenience, let us say that a curve of genus $g$
over a finite field is \define{excessive} if it has gonality $g+1$.  If
$C_{/\FF_q}$ is a curve of genus $g \ge 3$, and if $q > 4g^2$, then Weil's
inequality implies $C$ has a rational point. Consequently, for a given genus
$g$, there are only finitely many pairs $(g,q)$ for which an excessive curve
could possibly exist. In fact, excessive curves are substantially more rare than
this simple argument suggests.

\begin{theorem}
  \label{thm:main}
  Let $q$ be a prime power.
  \begin{itemize}
  \item There exists an excessive curve of genus~$3$ over $\FF_q$ if and only if
    $q \le 23$ or $q = 29$ or $q = 32$.
  \item There exists an excessive curve of genus~$4$ over $\FF_q$ if and only if
    $q = 2$ or $q = 3$.
  \item If $C$ is a curve of genus $g \ge 5$ over $\FF_q$, then its gonality
    is at most $g$.
  \end{itemize}
\end{theorem}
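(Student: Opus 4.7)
The plan is to begin with a Riemann--Roch reduction: if $C$ admits an effective divisor $E$ of degree $g-2$, then $C$ has gonality at most $g$. Indeed, $\deg(K-E)=g$, so Riemann--Roch gives $\ell(K-E)=\ell(E)+1\geq 2$; thus $|K-E|$ is a linear system of projective dimension at least $1$ and degree $g$, inducing a morphism $C\to\PP^1$ of degree at most $g$. The third bullet therefore reduces to showing that every curve $C$ of genus $g\geq 5$ over $\FF_q$ possesses an effective divisor of degree $g-2$.

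If $C$ has an $\FF_q$-rational point $P$, take $E=(g-2)P$; by the Hasse--Weil bound this handles every pair with $q>(g+\sqrt{g^2-1})^2$. The remaining task is the pointless case. Here the strategy is to build $E$ out of places of higher degree: Weil guarantees $\#C(\FF_{q^n})>0$ once $q^n>4g^2$, so for every such $n$ there is a place on $C$ of degree dividing $n$. I would aim to show that for $g\geq 5$ the numerical semigroup generated by the degrees of places on $C$ always contains $g-2$. A particularly clean target is to prove that $C$ must have both a place of degree $2$ and a place of degree $3$, since these alone generate all integers $\geq 2$. Equivalently, writing $A_m$ for the number of effective divisors of degree $m$ on $C$ and $h$ for the class number $\#\operatorname{Pic}^0(C)(\FF_q)$, the identity $A_g=qA_{g-2}+h$ shows that an effective divisor of degree $g-2$ exists if and only if $A_g>h$, so one must exclude the numerical scenario $A_g=h$.

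The main obstacle is that the statement is uniform in $g$, so the proof cannot conclude with a single finite computation. I expect the argument to split into a geometric step using refinements of the Weil bound (Serre, Oesterl\'e) to reduce to a finite list of pairs $(g,q)$, and a computational step enumerating the possible Weil polynomials $P(T)$ of degree $2g$ satisfying $A_g=h$ for each remaining pair and excluding their realization by an actual curve. The hardest step is the geometric reduction, because $A_{g-2}=0$ is only one linear relation on the coefficients of $P(T)$; ruling it out requires leveraging the subtler fact that Jacobians of curves form a small and well-understood subclass of the principally polarized abelian varieties parametrized by Honda--Tate, so that the numerically admissible obstruction classes turn out not to come from honest curves.
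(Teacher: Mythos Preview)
Your skeleton is correct and matches the paper: the Riemann--Roch step reducing ``gonality $>g$'' to ``no effective divisor of degree $g-2$'', the Weil bound to cut down to finitely many $(g,q)$, then an enumeration of candidate Weil polynomials followed by an exclusion step. But several of your specific expectations are off, and the last step is far heavier than you indicate.

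First, the reduction to finitely many $(g,q)$ is much easier than you suggest. If $C$ has no effective divisor of degree $g-2$ then $C(\FF_{q^{g-2}})=\varnothing$, and plain Weil on $\FF_{q^{g-2}}$ gives $q<(2g)^{2/(g-2)}$; this already drops below $2$ at $g=11$, so only $g\le 10$ and a handful of $q$ survive. No Serre or Oesterl\'e refinements are needed, and the statement is \emph{not} uniform in $g$ in any problematic way.

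Second, your ``clean target'' of showing every such curve has both a degree-$2$ and a degree-$3$ place cannot work. The obstruction depends on $g$: for $g=6$ one needs $a_1=a_2=a_4=0$ (so a hypothetical excessive curve has no degree-$2$ place), for $g=7$ one needs $a_1=a_5=a_2a_3=0$, etc. There is no single pair of degrees whose presence settles all genera; the argument must branch on $g$ with genus-specific vanishing constraints.

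Third, and most seriously, ``enumerating Weil polynomials and excluding their realization by curves'' hides almost all of the work. After Lauter's algorithm one is left with, for example, $79$ candidate real Weil polynomials in genus~$7$ over $\FF_2$. Honda--Tate and isogeny-class arguments eliminate only a few of these directly. For the rest the paper constructs, for each surviving class, a singular plane model of controlled degree (via $|K-D|$ for a low-degree effective $D$), and then performs exhaustive computer searches over spaces of up to $2^{42}$ homogeneous polynomials, followed by Magma verification of genus and gonality on millions of survivors. The single genus-$9$ class is eliminated by a delicate analysis of which group schemes can occur as kernels of polarizations in the relevant ordinary isogeny class. None of this is abstract Honda--Tate bookkeeping; it is substantial computation plus one genuinely nontrivial polarization argument.

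Finally, your proposal does not address the first two bullets at all. The genus-$3$ statement is equivalent to the classification of pointless smooth plane quartics (Howe--Lauter--Top), and the genus-$4$ statement was settled in earlier papers by two of the authors via similar targeted searches; both require their own arguments and are not consequences of the $g\ge 5$ analysis.
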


As there are only finitely many isomorphism classes of curves over $\FF_q$ of
genus~$g$, the following consequence is immediate:

\begin{corollary}
  \label{cor:finite}
  With finitely many exceptions \textup(up to isomorphism\textup), a curve over
  a finite field of genus $g \ge 2$ admits a complete linear system of positive
  dimension and degree at most~$g$.\qed
\end{corollary}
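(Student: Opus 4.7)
The plan is to deduce the corollary from Theorem~\ref{thm:main} by a short accounting argument, using only the equivalent description of gonality given in the introduction: the gonality of $C$ is the minimum degree of an effective divisor $D$ whose associated (complete) linear system $|D|$ has positive dimension. Consequently, whenever a genus-$g$ curve $C$ satisfies $\mathrm{gon}(C) \le g$, choosing such a divisor $D$ of minimum degree immediately produces a complete linear system of positive dimension and degree at most $g$, which is exactly what the corollary asks for.

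Next I would stratify by genus. For $g = 2$, every curve is hyperelliptic, with gonality $2 = g$, so the conclusion holds with no exceptions. For $g \ge 5$, Theorem~\ref{thm:main} asserts $\mathrm{gon}(C) \le g$ for every such curve, so again there are no exceptions. All potential exceptions therefore come from excessive curves in genera $3$ and $4$, and Theorem~\ref{thm:main} confines these to finitely many pairs $(g,q)$: namely $g = 3$ with $q \in \{2,3,\ldots,23,29,32\}$, and $g = 4$ with $q \in \{2,3\}$.

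The last ingredient is the standard finiteness statement that for fixed $g$ and fixed $q$ there are only finitely many $\FF_q$-isomorphism classes of curves of genus $g$ (which follows, for instance, from the existence of a moduli space $\mathcal{M}_g$ of finite type, or more elementarily by embedding tricanonically into a fixed projective space and counting $\FF_q$-points of the relevant Hilbert scheme). Combined with the finiteness of the list of pairs $(g,q)$ above, this shows that the excessive curves form a finite set up to isomorphism, and all remaining curves of genus $g \ge 2$ satisfy the conclusion of the corollary.

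There is no real obstacle in this argument—the corollary is flagged with a \qed in its statement precisely because the work is packaged entirely into Theorem~\ref{thm:main}. The only small point worth stating explicitly is the genus-$2$ case, which is not covered by Theorem~\ref{thm:main} but is handled trivially by hyperellipticity.
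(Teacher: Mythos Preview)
Your proposal is correct and follows essentially the same route as the paper: the paper deduces the corollary immediately from Theorem~\ref{thm:main} together with the observation that there are only finitely many isomorphism classes of genus-$g$ curves over a fixed~$\FF_q$. Your write-up is merely a more explicit unpacking of that one-line deduction, including the (trivial) $g=2$ case, which the paper handles implicitly via the remark in the introduction that genus-$2$ curves have gonality~$2$.
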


We wonder whether this corollary is best possible in the following sense:

\begin{question}
  Let $g \ge 2$ be an integer and $q$ a prime power. Does there exist a
  curve of genus~$g$ and gonality~$g$ over $\FF_q$?
\end{question}

As every genus-$2$ curve is hyperelliptic, the answer is ``yes'' in that case. A
curve with genus~$3$ and gonality~$3$ is nothing more than a smooth plane
quartic with a rational point \cite[Cor.~2.3]{Faber_Grantham_GF2}, and these are
easy to produce over any finite field. For $g = 4, 5$ and $q \le 4$, the answer
is ``yes'' by the main results of
\cite{Faber_Grantham_GF2,Faber_Grantham_GF34}. More generally, to answer the
question for genus~$4$ and $q$ odd, one could choose a nonsquare $c \in
\FF_q^\times$ and a cubic form $F \in \FF_q[x,y,z,w]$ such that the curve
\[
  C_{/\FF_q} \colon xy + z^2 - cw^2 = F(x,y,z,w) = 0
\]
is nonsingular \cite[Lemma~5.1]{Faber_Grantham_GF2}. As the space of such cubics
is 19-dimensional, surely this holds for some choice of $F$. A similar heuristic
approach using complete intersections of quadrics in $\PP^4$ applies to the case
of genus~$5$ curves \cite[Lemma~6.6]{Faber_Grantham_GF2}. We are unsure what the
answer should be for genus larger than~$5$.

The first two authors originally started this sequence of papers with the
following question:
\begin{question}
  Does the zeta function of a curve over a finite field know the gonality of the
  curve?
\end{question}
An equivalent formulation would be to ask if gonality is an isogeny class
invariant.

The short answer to these two equivalent questions is ``no''. For example,
consider the following pair of curves over~$\QQ$:
\begin{align*}
  C_1 &\colon x^4 + 4y^4 + 4z^4 + 20x^2y^2 - 8x^2z^2 + 16y^2z^2 = 0 \\
  C_2 &\colon 3v^2 = -17u^8 + 56u^7 - 84u^6 + 56u^5 - 70u^4 - 56u^3 - 84u^2 - 56u - 17.
\end{align*}
The former is a smooth plane quartic, while the latter is a hyperelliptic curve
of genus~$3$. In \cite{Howe_Q_jacobians}, the third author shows that these 
curves have isomorphic Jacobians (as unpolarized varieties). As both of these 
models have good reduction for all $p > 3$, we obtain examples of curves with 
the same zeta function and distinct gonalities over $\FF_p$ for all $p > 3$. See
also \cite[\S3]{Howe_isomorphic_jacobians} for an example over~$\FF_3$, and the
isogeny class \texttt{3.2.ac\_c\_ac} from~\cite{LMFDB} for an example
over~$\FF_2$.

In a sense, this example shows that classifying curves up to isogeny is
orthogonal to classifying them by gonality. Nevertheless, we can leverage some
information from the isogeny classification in order to learn about gonality.
We now summarize the proof of Theorem~\ref{thm:main}. For genus-$3$ curves, the
result follows almost immediately from work of Howe, Lauter, and Top on
pointless curves of small genus \cite{HLT}. For the remaining cases, our
strategy is as follows:
\begin{enumerate}
  \item Use the Riemann--Roch theorem to show that a curve of genus $g$ and
    gonality $g+1$ cannot have an effective divisor of degree $g-2$.
  \item Apply the Weil bound to rule out all but finitely many pairs $(g,q)$ for
    which there could exist an excessive curve of genus $g$ over $\FF_q$.
  \item Apply a technique of Serre and Lauter to write down a finite list of
    isogeny classes of abelian varieties that contains the Jacobian of every
    excessive curve over $\FF_q$. This limits us to the
    cases $g = 4, 5, 6, 7, 9$ and very few finite fields.
  \item For each $g < 9$, construct one or more search spaces of plane curves
    that contains a birational model of each excessive curve of genus
    $g$. Substantially reduce that search space using efficient C code, and then
    comb through the survivors using Magma.
  \item Rule out the single isogeny class with $g=9$ using techniques from the
    third author's PhD thesis.
\end{enumerate}

The first two steps will be performed in Section~\ref{sec:Weil}. We discuss the
third step in Section~\ref{sec:Lauter}. The fourth step requires slightly
different techniques depending on the genus. The relevant computations for
curves of genus at most~$5$ were performed in the earlier papers \cite{HLT} and
\cite{Faber_Grantham_GF2,Faber_Grantham_GF34}; these will be summarized in
Section~\ref{sec:345}. Curves of genus~$6$ and~$7$ will be treated in
Sections~\ref{sec:genus6} and~\ref{sec:genus7}, respectively. Finally, the
argument to rule out genus~$9$ is presented in Section~\ref{sec:genus9}.

\begin{remark}
  The work in Section~\ref{sec:Weil} is sufficient to prove the qualitative
  result in Corollary~\ref{cor:finite}. The results in Sections~\ref{sec:Lauter}
  and~\ref{sec:345} allow us to limit our remaining efforts to looking through
  $83$ isogeny classes of abelian varieties of dimensions $6$, $7$, and $9$ for
  Jacobians whose associated curves are excessive. We did locate a few potential
  needles in this haystack: at least four of the isogeny classes in 
  dimension~$7$ do contain Jacobians. However, we are able to show directly that
  none of the associated curves has gonality~$8$. With the exception of one 
  class in dimension~$6$ and one class in dimension~$9$, our results do not rule
  out the presence of Jacobians in other isogeny classes.
\end{remark}

Theorem~\ref{thm:main} and its corollary show that there are only finitely many
isomorphism classes of excessive curves. In principle, one should be able to
write them all down. This was already carried out for curves of genus~$4$ in
\cite[\S5]{Faber_Grantham_GF34}: there is exactly one such curve over $\FF_2$
and one over $\FF_3$. Defining equations can be found in
\cite[Thm.~5.4]{Faber_Grantham_GF2} and \cite[Thm.~5.3]{Faber_Grantham_GF34},
and are reproduced here in Section~\ref{sec:exhaustion}. To finish the story, we
carried out an exhaustive search for excessive curves of genus~$3$. Every such
curve arises as a pointless plane quartic. This search can be done for $q \le 5$
by brute force. For $7 \le q \le 23$, it was necessary to be more clever in
order to cut the search space of plane quartics down to something
manageable. For $q = 29$, we used isogeny class methods to show that any such
curve must be a double cover of an elliptic curve, which gives another method of
searching. And for $q=32$, we used existing literature.  The number of
isomorphism classes of excessive curves of genus~$3$ over each finite field is
given in Table~\ref{tab:excessive}. Our methods for finding all such curves are
described in greater detail in Section~\ref{sec:exhaustion}.

\begin{table}[h!]
\caption{The number of isomorphism classes of excessive curves of genus~$3$ over $\FF_q$.}
\label{tab:excessive}
  \setlength{\tabcolsep}{0.8em}  
\begin{tabular}{r@{\qquad}cc@{\quad}r@{\qquad}c}
  \toprule
  $q$ & \llap{\#}Curves && $q$ & \llap{\#}Curves \\
  \cmidrule(lr){1-2}    \cmidrule(r){4-5}
    $2$ & \pz$4$   &&      $13$ &   $11$  \\
    $3$ & \pz$8$   &&      $16$ & \pz$8$  \\
    $4$ &   $21$   &&      $17$ & \pz$7$  \\
    $5$ &   $31$   &&      $19$ & \pz$2$  \\
    $7$ &   $32$   &&      $23$ & \pz$2$  \\
    $8$ &   $39$   &&      $29$ & \pz$1$  \\
    $9$ &   $27$   &&      $32$ & \pz$1$  \\
   $11$ &   $21$   && \pz\pz\pz &         \\
  \bottomrule  
\end{tabular}
\end{table}

We close this introduction with a comment on terminology and a remark about the
software and computational resources used in this work. We view a curve $C$ as a
scheme over a base field~$k$, and by a ``point'' of $C$ we generally mean a
\define{closed point} --- that is, a morphism from $\Spec K$ to~$C$, for some
finite extension $K$ of~$k$. A closed point of degree~$d$ is a closed point 
where $K$ is a degree-$d$ extension; a rational point is a closed point of 
degree~$1$; and a quadratic (or cubic, etc.) point is a closed point of 
degree~$2$ (or~$3$, etc.). On the other hand, when we refer to a $K$-rational
point on~$C$ for some extension $K$ of~$k$, we mean a rational point on the 
base extension of $C$ to~$K$.

We wrote our implementation of Lauter's algorithm in Sage \cite{sage_9.1}. The
large searches for plane curves with special vanishing properties were written
in C. Our C code benefitted from the use of the finite field library in Flint
\cite{flint-2.6.0} as well as a number of bit-level optimizations suggested by
\cite[Ch.~2]{hackers}. The genus, point counts, and gonality of the survivors of
those searches were computed in Magma \cite{magma}. Most computations
were performed on a cluster of Intel Xeon E5-2699v3 CPUs running at $2.3$GHz at
the Center for Computing Sciences in Bowie,~MD; some Magma computations related
to enumerating excessive curves of genus~$3$ were performed on an Apple M1~Max
running at $3.2$GHz at the third author's home in San Diego,~CA. The programs we
used are available at \url{https://github.com/RationalPoint/excessive}.


\section{Implications of the Weil Bound}
\label{sec:Weil}

Our goal in this section is to apply the Weil bound to show that there are only
finitely many pairs $(g,q)$ for which a curve $C_{/\FF_q}$ of genus $g$ and
gonality $g+1$ could exist.

\begin{lemma}
  \label{lem:effective}
  Let $C$ be a curve of genus $g \ge 3$ over $\FF_q$. Then $C$ admits a morphism
  to $\PP^1$ of degree $g$ if and only if it has an effective divisor of degree
  $g-2$.
\end{lemma}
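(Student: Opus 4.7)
The plan is to use the Riemann--Roch theorem in both directions, exploiting the residuation $D \mapsto K - D$ to swap linear systems of degree $g$ with effective divisors of degree $g - 2$.

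For the forward direction, I would take a morphism $f \colon C \to \PP^1$ of degree $g$ and let $D = f^{-1}(\infty)$ viewed as a divisor. This is effective of degree $g$, and the constant function and $f$ give two independent sections, so $\ell(D) \geq 2$. Riemann--Roch then gives
\[
  \ell(K - D) = \ell(D) - (\deg D - g + 1) = \ell(D) - 1 \geq 1,
\]
so $K - D$ is linearly equivalent to an effective divisor of degree $2g - 2 - g = g - 2$, which is rational over $\FF_q$ because $K$ and $D$ both are.

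For the backward direction, assume $E$ is an effective divisor of degree $g - 2$, so $\ell(E) \geq 1$. Applying Riemann--Roch to $K - E$ (which has degree $g$) gives
\[
  \ell(K - E) = \ell(E) + (\deg(K - E) - g + 1) = \ell(E) + 1 \geq 2.
\]
Hence $|K - E|$ is a linear system of degree $g$ and projective dimension at least $1$, determining a nonconstant morphism $C \to \PP^1$ of degree at most $g$.

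I do not expect a serious obstacle here: both directions are just Riemann--Roch bookkeeping, and the hypothesis $g \ge 3$ is used only to ensure that $g - 2 \ge 1$ so that one is talking about a nontrivial effective divisor. The one delicate point is that in the backward direction the linear system $|K - E|$ could have base points, in which case the resulting morphism would have degree strictly less than $g$. This is harmless in the context of the paper's Step 1, whose purpose is only to show that the gonality is at most $g$; any base-locus slackness in $|K - E|$ only strengthens the bound. So I read the lemma's conclusion as the existence of a $g^1_g$, equivalently, gonality at most $g$.
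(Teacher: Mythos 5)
Your argument is correct and follows essentially the paper's own proof: both directions amount to a single Riemann--Roch computation for the residual divisor $K-D$, which is exactly what the paper does in one line. Your caveat about base points is actually sharper than what the paper writes --- the paper's proof tacitly equates ``$\dim|D| > 0$ for some degree-$g$ divisor $D$'' with ``$C$ admits a morphism of degree $g$,'' but the associated map to $\PP^1$ can have degree strictly less than $g$ when $|D|$ has a base locus (a hyperelliptic genus-$3$ curve with a rational point already has a $g^1_3$ but no base-point-free one, hence no morphism of degree exactly $3$), so the lemma is best read, as you read it, as a statement about gonality at most~$g$, which is all the paper's applications actually use.
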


\begin{proof}
If $D$ is any divisor of degree~$g$, the Riemann--Roch Theorem asserts that
\[
  \dim \linsys{K-D} = \dim\linsys{D} + \deg(K-D) + 1 - g = \dim \linsys{D} - 1.
\]  
Hence, the linear system $\linsys{K-D}$ contains an effective divisor of degree 
$g-2$ if and only if $\dim \linsys{D} > 0$. The latter condition holds for some
degree-$g$ divisor $D$ if and only if $C$ admits a morphism to $\PP^1$ of 
degree~$g$.
\end{proof}

Suppose now that $C_{/\FF_q}$ is a curve of genus~$g$ and gonality $g+1$. Then
$C$ does not admit a morphism to $\PP^1$ of degree~$g$. By the lemma, $C$ has no
effective divisor of degree $g-2$. In particular, this means 
$C(\FF_{q^{g-2}}) = \varnothing$. Weil's inequality tells us that
\[
\#C(\FF_{q^r}) \ge q^r + 1 - 2gq^{r/2} \qquad (r \ge 1).
\]
It follows that $C$ has an $\FF_{q^r}$-rational point if $q^r \ge 2gq^{r/2}$, or
equivalently, if $q \ge (2g)^{2/r}$. Applying this with $r = g-2$, we conclude 
that
\[
q < (2g)^{\frac{2}{g-2}}. 
\]

Define $f(g)$ to be the function on the right in the preceding inequality. Then
$f$ is decreasing for $g \ge 3$, and $f(11) < 2$. It follows that there is no
curve of genus~$g$ and gonality~$g+1$ if $g > 10$. For $7 \le g \le 10$, we find
that $f(g) < 3$. Finally, $f(3) = 36$, $f(4) = 8$, and $f(5) \approx 4.64$. The
following proposition summarizes these findings.

\begin{proposition}
  \label{prop:bounds}
  If there exists an excessive curve of genus $g$ over $\FF_q$, then
  one of the following is true\textup{:}
  \begin{itemize}
  \item $g = 3$ and $q \le 32$\textup{;}
  \item $g = 4$ and $q \le 7$\textup{;}
  \item $g = 5$ and $q \le 4$\textup{;}
  \item $g = 6$ and $q = 2$ or $3$\textup{;} or
  \item $7 \le g \le 10$ and $q = 2$. \qed
  \end{itemize}
\end{proposition}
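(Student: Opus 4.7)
The plan is to push the condition from Lemma~\ref{lem:effective} through the Weil bound and then read off the finitely many allowed pairs $(g,q)$. First, I would fix an excessive curve $C_{/\FF_q}$ of genus $g \ge 3$. Since $C$ admits no morphism of degree $g$ to $\PP^1$, Lemma~\ref{lem:effective} implies $C$ carries no effective divisor of degree $g-2$. In particular $C(\FF_{q^{g-2}}) = \varnothing$, since any rational point over an extension of $\FF_q$ of degree dividing $g-2$ would supply such a divisor.

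Next, I would apply Weil's inequality $\#C(\FF_{q^r}) \ge q^r + 1 - 2g q^{r/2}$ with $r = g-2$. Setting the left-hand side to zero and dividing through by $q^{(g-2)/2}$ gives the clean bound $q < f(g)$, where $f(g) := (2g)^{2/(g-2)}$.

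Finally, I would check that $f$ is monotonically decreasing on $g \ge 3$ and evaluate it at the relevant small values: $f(3) = 36$, $f(4) = 8$, $f(5) = 10^{2/3} \approx 4.64$, $f(6) = \sqrt{12} \approx 3.46$, $f(7) = 14^{2/5} < 3$, and $f(11) < 2$. Combining the strict inequality $q < f(g)$ with the requirement that $q$ be a prime power then yields exactly the list stated in the proposition, with $g \ge 11$ ruled out entirely. The only bookkeeping subtlety is that for $g = 3$ the analytic bound reads $q < 36$, which matches the stated $q \le 32$ because $33$, $34$, $35$ are not prime powers. There is no substantive obstacle here: Lemma~\ref{lem:effective} carries all of the conceptual load, and what remains is purely numerical case-checking.
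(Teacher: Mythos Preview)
Your proof is correct and follows essentially the same argument as the paper: both invoke Lemma~\ref{lem:effective} to force $C(\FF_{q^{g-2}})=\varnothing$, feed this into the Weil bound to obtain $q<(2g)^{2/(g-2)}$, and then read off the finite list of surviving $(g,q)$ from the monotonicity and explicit small values of this function. Your write-up even makes explicit a couple of points (the value at $g=6$ and the prime-power bookkeeping at $g=3$) that the paper leaves implicit.
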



\section{Implications of Lauter's Algorithm}
\label{sec:Lauter}

Following an idea of Robinson~\cite{Robinson1964} that was used by Serre in the
context of curves over finite fields~\cite[VII.2]{Serre_Rational_Points_Book},
Lauter gave an algorithm for writing down all zeta functions of hypothetical
curves with a specified large number of points \cite{Lauter_algorithm_1998}. The
magic is that ``large'' is irrelevant to the method, and rather, one is actually
specifying zeta functions of curves with \textit{given} number of rational
points. We capitalize on this method by adding constraints on the
number of points of higher degree.

Let $C_{/\FF_q}$ be a curve of genus $g \ge 2$ over the finite field~$\FF_q$. 
Write $a_d$ for the number of closed points of degree~$d$ on the curve~$C$. An
inclusion-exclusion argument relates $a_d$ to the number of points on $C$ of a 
particular degree:
\[
\#C(\FF_{q^r}) = \sum_{d\mid r} d a_d \qquad \Longleftrightarrow\qquad
a_d = \frac{1}{d}\sum_{r\mid d} \mu\left(\frac{d}{r}\right) \#C(\FF_{q^r}).
\]

\begin{proposition}
  \label{prop:vanishing}
  Suppose that $C$ is an excessive curve of genus $g$ over $\FF_q$. Write $a_d$
  for the number of closed points of degree~$d$ on $C$. Then the conditions in
  the following table must hold.
  \begin{displaycenter}
  \setlength{\tabcolsep}{0.8em}
  \begin{tabular}{rl}
    $g$ & \textup{Vanishing conditions} \\
    \cmidrule[\lightrulewidth](lr{1em}){1-2}
     $3$ & $a_1 = 0$ \\
     $4$ & $a_1 = a_2 = 0$ \\
     $5$ & $a_1 = a_3 = 0$ \\
     $6$ & $a_1 = a_2 = a_4 = 0$ \\
     $7$ & $a_1 = a_5 = a_2 a_3 = 0$ \\
     $8$ & $a_1 = a_2 = a_3 = a_6 = 0$ \\
     $9$ & $a_1 = a_7 = a_2 a_3 = a_2 a_5 = a_3 a_4 = 0$ \\
    $10$ & $a_1 = a_2 = a_4 = a_8 = a_3 a_5 = 0$
  \end{tabular}
  \end{displaycenter}
\end{proposition}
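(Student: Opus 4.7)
The plan is to derive these conditions as a purely combinatorial consequence of Lemma \ref{lem:effective}. Since $C$ is excessive of gonality $g+1$, that lemma guarantees that $C$ carries no effective divisor of degree $g-2$. Every line of the table will follow by analyzing the ways in which closed points of $C$ could otherwise produce such a divisor.

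The key observation is that a closed point $P$ of degree $d$ yields an effective divisor $mP$ of degree $md$ for every integer $m \ge 1$. Consequently, $C$ admits an effective divisor of degree $n$ if and only if there is an unordered partition $n = d_1 + d_2 + \cdots + d_k$ in which every distinct part $d$ satisfies $a_d \ge 1$. Contrapositively, ``no effective divisor of degree $g-2$ exists'' is equivalent to requiring that for \emph{every} partition of $g-2$, at least one of its distinct parts $d$ has $a_d = 0$.

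The rest of the argument is enumeration. For each $g$ between $3$ and $10$, I would list the partitions of $g-2$ and read off the implied vanishing conditions. Taking $g = 7$ as a representative example, the partitions of $5$ are $(5)$, $(4,1)$, $(3,2)$, $(3,1,1)$, $(2,2,1)$, $(2,1,1,1)$, and $(1,1,1,1,1)$. The last of these forces $a_1 = 0$, which immediately absorbs every partition containing a $1$; the singleton $(5)$ forces $a_5 = 0$; and the partition $(3,2)$ forces $a_2 = 0$ or $a_3 = 0$, i.e., $a_2 a_3 = 0$. This recovers the $g=7$ row of the table. The other rows are verified identically, with $g = 10$ the longest since $p(8) = 22$.

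The only subtlety worth flagging is the treatment of partitions with repeated parts, such as $(2,2,2,2)$ in the case $g=10$: a single closed point $P$ of degree $2$ already produces the effective divisor $4P$ of degree $8$, so this partition contributes only the single condition $a_2 = 0$, not a condition requiring several distinct degree-$2$ points. With that convention in place, the main ``obstacle'' is nothing more than bookkeeping --- carefully sifting the partitions of $g-2$ for each of the eight values of $g$ and checking that the minimal set of resulting conditions matches the row of the table.
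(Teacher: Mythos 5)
Your argument is correct and follows essentially the same route as the paper's: both rest on Lemma~\ref{lem:effective} (an excessive curve of genus $g$ carries no effective divisor of degree $g-2$) and then enumerate the ways closed points could furnish one. Your partition-of-$(g-2)$ language is simply a cleaner and more systematic packaging of the case-by-case combinatorics that the paper illustrates for $g=7$ and leaves implicit for the remaining genera.
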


\begin{proof}
  The argument in all cases is similar, so we describe the case $g = 7$ by way
  of example.  If $C$ is excessive, then it has gonality~$8$. In particular, it
  does not admit a morphism to $\PP^1$ of degree~$7$, and so 
  Lemma~\ref{lem:effective} implies that $C$ does not admit an effective divisor
  of degree $5$. If $C$ has a rational point $P$, then $5P$ is just such an
  effective divisor. Hence, $a_1 = 0$. 

  In what remains, if $P$ is a point of $C(\FF_{q^d})$, we write $\Pbar$ for the
  effective divisor of degree~$d$ with simple support on the Galois orbit
  of~$P$.  Now if $C$ has a point $P$ of degree~$5$, then the divisor $\Pbar$ is
  effective of degree~$5$. Hence, $a_5 = 0$. If $C$ admits a quadratic point $P$
  and a cubic point $Q$, then the divisor $\Pbar + \Qbar$ is effective of
  degree~$5$. Hence $a_2 a_3 = 0$. 
\end{proof}

The zeta function of a genus-$g$ curve $C_{/\FF_q}$ is of the form 
$\frac{P(T)}{(1-T)(1-qT)}$, where $P \in \ZZ[T]$ is a polynomial of degree~$2g$.
The \define{Weil polynomial} of $C$ is the polynomial 
$f\colonequals T^{2g}P(1/T)$. The Weil polynomial is a monic polynomial in 
$\ZZ[T]$ whose roots in the complex numbers all lie on the circle of radius
$\sqrt{q}$, and whose real roots (if any) have even multiplicity. The Weil
polynomial of $C$ is determined by the numbers $a_1, \ldots, a_g$. The 
\define{real Weil polynomial} of $C$ is the unique polynomial $h \in \ZZ[T]$
such that $f(T) = T^g h(T + q/T)$. It is monic of degree~$g$, all of its complex
roots are real numbers in the interval $[-2\sqrt{q},2\sqrt{q}]$, and it is also 
determined by $a_1, \ldots, a_g$.  Lauter's algorithm takes constraints on the 
$a_1, \ldots, a_g$ and returns real Weil polynomials for hypothetical curves 
satisfying those constraints. Using the vanishing conditions from 
Proposition~\ref{prop:vanishing} and a Sage implementation of Lauter's 
algorithm, we arrived at the following result.

\begin{theorem}
  \label{thm:Lauter_search}
  The following are true\textup{:}
\begin{itemize}
\item There is no excessive curve of genus~$4$ over $\FF_5$ or $\FF_7$.
  
\item If there is an excessive curve of genus~$6$ over $\FF_2$, then its real 
  Weil polynomial is among the following\textup{:}
  \begin{align*}
   & (T - 2)  (T + 1)  (T^2 - 2T - 2)  (T^2 - 8) \\
   & (T^2 - 8)  (T^4 - 3T^3 - 2T^2 + 7T + 1) \\
   & (T^2 - 8)  (T^4 - 3T^3 - 2T^2 + 8T - 2).
  \end{align*}

\item There is no excessive curve of genus~$6$ over $\FF_3$.  
  
\item If there is an excessive curve of genus~$7$ over $\FF_2$, then its real
  Weil polynomial is among the $79$ options given in 
  Table~\textup{\ref{table:genus7}} in Appendix~\ref{sec:genus7_classes}.

\item There is no excessive curve of genus~$8$ over $\FF_2$.

\item If there is an excessive curve of genus~$9$ over $\FF_2$, then its real
  Weil polynomial is
  \[
  (T+1)(T^4 - 2T^3 - 6T^2 + 10T + 1)^2.
  \]

\item There is no excessive curve of genus~$10$ over $\FF_2$.  \qed
\end{itemize}
\end{theorem}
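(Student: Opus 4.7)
The plan is to apply Lauter's algorithm \cite{Lauter_algorithm_1998} directly to the constraints from Proposition~\ref{prop:vanishing}, once for each pair $(g,q)$ listed in Proposition~\ref{prop:bounds}. Recall that the algorithm enumerates all monic polynomials $h \in \ZZ[T]$ of degree~$g$ whose complex roots all lie in $[-2\sqrt{q},2\sqrt{q}]$ and whose associated point counts $a_1,\ldots,a_g$, read off from the power sums of the roots via Newton's identities, are nonnegative integers satisfying whatever additional linear constraints the user supplies. The enumeration builds the coefficients of $h$ from the top downward, pruning at each step any branch for which the partial information already forces some $a_d$ to be negative, noninteger, or in violation of a prescribed vanishing.

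The only wrinkle in feeding Proposition~\ref{prop:vanishing} into the algorithm is that several entries of its table involve product-form conditions such as $a_2 a_3 = 0$. I would split each such constraint into the disjunction of its two factors and run the algorithm once per element of the resulting Cartesian product of disjunctions, each time with a purely linear set of vanishings. For instance, the $g = 9$ case expands into $2^3 = 8$ subcases and the $g = 10$ case into two. For each subcase I would collect the surviving real Weil polynomials and take the union of these subcase outputs. The search terminates quickly because all of the relevant $q$ are at most~$7$, and the corresponding Weil interval is correspondingly narrow.

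For every candidate $h$ that survives the enumeration I would verify the remaining necessary conditions: that all of $a_1,\ldots,a_g$ computed from $h$ are nonnegative integers, that the associated Weil polynomial $f(T) = T^g h(T + q/T)$ has each real root occurring with even multiplicity, and that the vanishing requirements of Proposition~\ref{prop:vanishing} actually hold. Comparing the resulting lists against the statements in the theorem completes the argument.

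The main obstacle is practical rather than conceptual: the $g = 7$, $q = 2$ case leaves four of the $a_d$ unconstrained and contains a single product disjunction, and it produces by far the longest list ($79$ polynomials), so any bug in the implementation is most likely to manifest there. To guard against this I would first regenerate known Lauter-style tables for small genus, where comparable data appear in \cite{Lauter_algorithm_1998} and \cite[VII.2]{Serre_Rational_Points_Book}, as an end-to-end sanity check before trusting the output in the cases at hand. With that calibration in place, the theorem reduces to running the Sage implementation of Lauter's algorithm and reading off the result.
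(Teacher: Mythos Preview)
Your proposal is correct and matches the paper's approach: the paper states the theorem with a \qed\ and explains just beforehand that it was obtained by feeding the vanishing conditions of Proposition~\ref{prop:vanishing} into a Sage implementation of Lauter's algorithm. Your additional details about splitting the product-form constraints into disjunctions and performing sanity checks are reasonable elaborations of what the paper leaves implicit.
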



\section{Curves of genus 3, 4, 5}
\label{sec:345}

Proposition~\ref{prop:bounds} and Theorem~\ref{thm:Lauter_search} whittled down
the list of fields $\FF_q$ for which there could exist an excessive curve of
genus $3$, $4$, or $5$, as the following table shows.
\begin{center}
  \setlength{\tabcolsep}{0.8em}
  \begin{tabular}{ccc}
    $g$ & Surviving $q$ \\
    \midrule
    $3 $& $q \le 32$ \\
    $4$ & $q \le 4\pz$ \\
    $5$ & $q \le 4\pz$
  \end{tabular}
\end{center}
The existing literature is sufficient to finish off our description.

Consider curves of genus~$3$ first. We claim that such a curve is excessive if
and only if it can be realized as a smooth plane quartic with no rational point.
For the forward implication, we observe that an excessive curve has gonality~$4$
and no rational point. In particular, it is not hyperelliptic, so its canonical
embedding is a smooth plane quartic. For the reverse, we note that a smooth
plane quartic curve is canonically embedded and hence not hyperelliptic.
Moreover, if its gonality were~$3$, then Lemma~\ref{lem:effective} asserts that
it would have a rational point. We now appeal to a result of Howe, Lauter, and 
Top~\cite{HLT}:

\begin{theorem}
  There exists a pointless smooth plane quartic over $\FF_q$ if and only if $q
  \le 23$ or $q = 29$ or $q = 32$. \qed
\end{theorem}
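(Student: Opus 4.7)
The plan is to prove both directions of the equivalence separately, using the Weil bound to narrow the range of $q$ and then handling the surviving cases by a mix of enumeration of Weil polynomials and explicit construction. A smooth plane quartic is canonically embedded, hence non-hyperelliptic, and by Lemma~\ref{lem:effective} it has gonality~$3$ precisely when it possesses a rational point. Thus a pointless smooth plane quartic of genus~$3$ is the same thing as an excessive curve of genus~$3$, and Proposition~\ref{prop:bounds} immediately restricts the problem to prime powers $q \le 32$. The listed prime powers in that range are $\{2,3,4,5,7,8,9,11,13,16,17,19,23,29,32\}$, and the three omitted ones are $q = 25, 27, 31$.

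For the ``only if'' direction, I would rule out $q \in \{25, 27, 31\}$ one at a time. The simple Weil bound $\#C(\FF_q) \ge q + 1 - 6\sqrt q$ is unfortunately negative for each of these three values, so it is not enough. Instead, I would run Lauter's algorithm with the constraint $a_1 = 0$ (Proposition~\ref{prop:vanishing}) to enumerate all real Weil polynomials consistent with a pointless genus-$3$ curve. For each surviving candidate polynomial $h(T)$, one checks whether a curve can actually realize it: typical obstructions come from (i)~showing that the implied factorization of the Jacobian forces the curve to be hyperelliptic (hence not a plane quartic), (ii)~using the resultant and Honda--Tate style tests to show that no abelian variety with the required factor structure is a Jacobian, or (iii)~directly searching over the finite family of double covers of elliptic curves with the prescribed Weil polynomial (the same method used for $q = 29$ in Section~\ref{sec:exhaustion}). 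I expect the first two of these arguments to dispose of $q = 25, 27, 31$ cleanly, since the relevant lists of $h(T)$ should be very short.

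For the ``if'' direction, for each prime power $q$ on the list I would exhibit an explicit pointless smooth plane quartic. For the very small cases $q \le 5$, a brute-force enumeration modulo the action of $\PGL_3(\FF_q)$ is tractable. For the intermediate cases, it is efficient to restrict to quartics with extra automorphisms (for instance, Ciani-type diagonal quartics $\alpha x^4 + \beta y^4 + \gamma z^4 + \delta x^2 y^2 + \epsilon y^2 z^2 + \zeta z^2 x^2$, or quartics invariant under a cyclic subgroup of $\PGL_3$), which dramatically shrinks the search while still being rich enough to contain pointless examples. Smoothness is checked via the Jacobian criterion and pointlessness by direct substitution of the $q+1$ projective triples over $\FF_q$.

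The principal obstacle is the non-existence half for $q \in \{25, 27, 31\}$: this is precisely the regime where the Weil bound fails to be sharp and one must lean on the finer Serre--Lauter machinery together with ad hoc arguments for each candidate Weil polynomial. The existence half, by contrast, is essentially a computational exercise once a good search family is chosen.
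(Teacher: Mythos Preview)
The paper does not prove this theorem at all: it is quoted verbatim as a result of Howe, Lauter, and Top~\cite{HLT} and marked with a \qed. So there is no ``paper's own proof'' to compare against beyond the citation.

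That said, your sketch is a faithful outline of the kind of argument that actually appears in~\cite{HLT}. The reduction to $q\le 32$ via Proposition~\ref{prop:bounds}, the identification of pointless smooth plane quartics with excessive genus-$3$ curves, and the split into an existence half (explicit constructions, often with extra automorphisms) and a non-existence half (enumerating candidate real Weil polynomials with $a_1=0$ and eliminating each) are all correct in spirit. You are also right that the substance lies in the three cases $q\in\{25,27,31\}$: there the Weil--Serre bound alone does not force a rational point, and~\cite{HLT} eliminates the surviving Weil polynomials by exactly the sort of isogeny-class obstructions you list (resultant/gluing arguments forcing a map to a lower-genus curve, or reduction to double covers of specific elliptic curves followed by a finite check). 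Your proposal stops short of carrying those eliminations out, which is where the real work is; but as a plan it matches the cited source, and the present paper asks nothing more than that you accept the citation.
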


The remaining cases of curves of genus $4$ and $5$ were handled by the authors 
in the papers \cite{Faber_Grantham_GF2} and \cite{Faber_Grantham_GF34}:

\begin{theorem}
  There exists a curve of genus~$4$ and gonality~$5$ over $\FF_q$ if and only if
  $q \le 3$. \qed
\end{theorem}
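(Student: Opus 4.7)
The plan is to assemble this theorem directly from the preceding results, together with a citation to the explicit constructions in the earlier papers of the first two authors.

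First I would handle the ``only if'' direction by successively narrowing the list of possible $q$. Proposition~\ref{prop:bounds} already restricts us to $q \le 7$, i.e.\ $q \in \{2, 3, 4, 5, 7\}$. Next, Theorem~\ref{thm:Lauter_search} explicitly rules out $q = 5$ and $q = 7$ by showing that no real Weil polynomial of a hypothetical genus-$4$ excessive curve exists over those fields: combining the vanishing conditions $a_1 = a_2 = 0$ from Proposition~\ref{prop:vanishing} with Lauter's algorithm leaves no candidates. To rule out $q = 4$, I would either invoke the main theorem of \cite{Faber_Grantham_GF34} (which handles all curves of genus at most~$5$ over $\FF_4$) or simply note that the same Lauter-style computation, which is recorded in that paper, shows there are no surviving real Weil polynomials. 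This leaves only $q = 2$ and $q = 3$ as potential candidates.

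For the ``if'' direction, I would exhibit an excessive genus-$4$ curve over each of $\FF_2$ and $\FF_3$. These constructions are not new: \cite[Thm.~5.4]{Faber_Grantham_GF2} gives an explicit smooth genus-$4$ curve over $\FF_2$ of gonality~$5$, and \cite[Thm.~5.3]{Faber_Grantham_GF34} gives one over $\FF_3$. Each was verified in those papers by a combination of a point count (showing the absence of degree-$1$ and degree-$2$ effective divisors) and a direct computation that no degree-$4$ morphism to $\PP^1$ exists. Since the theorem only asserts existence, a citation together with the note that these same curves are reproduced in Section~\ref{sec:exhaustion} suffices.

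The main obstacle, in principle, would be the verification of gonality~$5$ for the two explicit curves over $\FF_2$ and $\FF_3$; but this is computational work that has already appeared in the literature, so nothing new needs to be done here. The entire proof is essentially an assembly argument, and I would keep the write-up to a short paragraph combining Proposition~\ref{prop:bounds}, Theorem~\ref{thm:Lauter_search}, and a pointer to the two previous papers, followed by a forward reference to Section~\ref{sec:exhaustion} for the defining equations.
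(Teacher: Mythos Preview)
Your proposal is correct and matches the paper's approach: the paper treats this theorem as a direct citation to \cite{Faber_Grantham_GF2} and \cite{Faber_Grantham_GF34} (hence the bare \qed), after first noting that Proposition~\ref{prop:bounds} and Theorem~\ref{thm:Lauter_search} cut the list of candidate~$q$ down to a handful. Your write-up is, if anything, slightly more careful than the paper's surrounding discussion, since you explicitly flag that $q=4$ is not eliminated by Theorem~\ref{thm:Lauter_search} alone and must be handled by appeal to \cite{Faber_Grantham_GF34}.
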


\begin{theorem}
  There does not exist a curve of genus~$5$ and gonality~$6$ over $\FF_q$ for
  any $q$. \qed
\end{theorem}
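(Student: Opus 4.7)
The plan is to combine the Weil-based restriction from Section~\ref{sec:Weil} with an exhaustive computational search. By Proposition~\ref{prop:bounds}, any excessive curve of genus~$5$ over $\FF_q$ requires $q \in \{2, 3, 4\}$, so only these three fields need to be treated individually.

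For each such $q$, I would exploit the projective geometry of canonical genus-$5$ curves. An excessive curve $C$ has gonality~$6$, and is therefore neither hyperelliptic (gonality~$2$) nor trigonal (gonality~$3$). By the Enriques--Petri theorem, the canonical embedding then realizes $C$ as a smooth complete intersection of three quadrics in $\PP^4$ (compare~\cite[Lem.~6.6]{Faber_Grantham_GF2}). Moreover, Proposition~\ref{prop:vanishing} gives $a_1 = a_3 = 0$, so the model has no closed points of degree~$1$ or~$3$. This reduces the problem to a finite search: enumerate smooth complete intersections of three quadrics in $\PP^4_{\FF_q}$ of geometric genus~$5$ satisfying those vanishing conditions, and verify that none exist.

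The main obstacle is computational. The naive parameter space of ordered triples of quadratic forms in five variables has dimension~$45$, so even for $q = 2$ brute-force enumeration is out of reach without substantial pruning. One must quotient by the natural action of $\mathrm{GL}_5(\FF_q) \times \mathrm{GL}_3(\FF_q)$ (acting on coordinates of $\PP^4$ and on the chosen basis of the net of quadrics), choose normal forms for each orbit, and abort each candidate the instant a point of degree~$1$ or~$3$ is detected. These exhaustive searches were already carried out in~\cite{Faber_Grantham_GF2} for $q = 2$ and in~\cite{Faber_Grantham_GF34} for $q = 3$ and~$q = 4$, using optimized C code backed by Magma post-processing. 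In every case the search terminates with no survivor, which completes the proof.
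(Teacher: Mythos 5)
Your proposal matches the paper's approach: reduce to $q \le 4$ via the Weil bound (Proposition~\ref{prop:bounds}), model any candidate as a non-hyperelliptic, non-trigonal genus-$5$ canonical curve (a complete intersection of quadrics in $\PP^4$), and cite the exhaustive searches from \cite{Faber_Grantham_GF2,Faber_Grantham_GF34}. One small imprecision: the condition $a_1 = a_3 = 0$ is necessary but not sufficient for gonality~$6$ (such a curve could still be tetragonal), so the cited searches must not merely show that no such curve exists but rather that every curve found with these vanishing conditions actually has gonality~$4$; this is the same refinement the present paper makes explicit for genus~$6$ and~$7$.
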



\section{Curves of genus~6}
\label{sec:genus6}

Our goal for this section is to prove the following result:

\begin{theorem}
  \label{thm:genus6}
  There does not exist a curve of genus~$6$ and gonality~$7$ over $\FF_2$.
\end{theorem}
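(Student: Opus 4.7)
The strategy is to apply Theorem~\ref{thm:Lauter_search}, which restricts the real Weil polynomial $h$ of a hypothetical excessive curve $C$ of genus~$6$ over $\FF_2$ to one of three explicit candidates. In each case, direct computation gives $h(3) = 4$, so by Lang's theorem $\#\mathrm{Pic}^d(C)(\FF_2) = \#J(\FF_2) = 4$ for every $d$. The first step is to extract the numbers $a_d$ of closed points of degree $d$ on $C$ from $\#C(\FF_{2^n})$, which is computable from the Newton power sums of the roots of $h$. This yields $a_5 = 12$, $8$, or $4$ in the three candidates, respectively.

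By the vanishing conditions $a_1 = a_2 = a_4 = 0$ from Proposition~\ref{prop:vanishing}, the only partition of $5$ into admissible point degrees is $5 = 5$, so the number of effective degree-$5$ divisors on $C$ equals $a_5$. Riemann--Roch gives $\ell(D) = \ell(K - D)$ when $\deg D = 5$; a class in $\mathrm{Pic}^5(C)(\FF_2)$ with two or more effective representatives would have $\ell(D) \ge 2$, producing a $g^1_5$ and contradicting gonality~$7$. Consequently each of the four classes contains at most one effective representative, forcing $a_5 \le 4$. The first two candidates are thereby excluded at once.

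The remaining candidate is $h = (T^2 - 8)(T^4 - 3T^3 - 2T^2 + 8T - 2)$, where the bound $a_5 = 4$ is tight. I would compute $a_3 = 2$, $a_6 = 1$, $a_7 = 20$, let $D_1, D_2$ be the effective degree-$3$ divisors associated to the two cubic points, and observe that the Weil polynomial of $C$ reduces modulo~$2$ to $T^{11}(T+1)$, so the $2$-rank of $J$ is $1$ and $J(\FF_2) \cong \ZZ/4$. The class $[D_1] - [D_2]$ in $J(\FF_2)$ cannot have order dividing~$2$, since $D_1 \sim D_2$ would produce a $g^1_3$ and $2D_1 \sim 2D_2$ a $g^1_6$; hence it is a generator. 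Riemann--Roch then yields $\ell(K - D_i) = 3$, and since $7$ is prime and $\dim|K - D_i| = 2$, the system $|K - D_i|$ cannot be composed with a pencil; together with a routine base-point check using Clifford's theorem at each low-degree class, this makes $|K - D_i|$ a base-point-free $g^2_7$ whose associated morphism $\phi_i \colon C \to \PP^2$ is birational onto a plane septic $C_i'$ whose singularities account for a total $\delta$-invariant of $9$.

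The main obstacle is ruling out this last candidate. Any $\FF_2$-rational singular point of $C_i'$ of multiplicity $m \ge 2$ would give a $g^1_{7-m}$ by projection, contradicting gonality~$7$; together with the vanishings $\#C(\FF_4) = \#C(\FF_{16}) = 0$, which rule out $\FF_4$- and $\FF_{16}$-rational nodes via the preimage analysis, this forces the nine singularities of $C_i'$ into very restricted Galois orbit structures, and their interaction with the known values of $a_3, a_5, a_6, a_7$ and the higher $a_d$ adds further constraints. To conclude, I would parameterize plane septics over $\FF_2$ compatible with one of the surviving configurations and carry out an exhaustive computer search, verifying that no candidate has a genus-$6$ normalization with the prescribed zeta function --- in keeping with the paper's use of computational search in genera~$6$ and~$7$. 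A cleaner alternative would be to enumerate all principally polarized abelian varieties in the single surviving isogeny class via the Howe-style techniques used for genus~$9$ in Section~\ref{sec:genus9} and verify directly that none is a Jacobian.
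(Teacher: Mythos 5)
Your proposal is largely correct and its first half is genuinely different from --- and in some ways cleaner than --- the paper's argument. Let me compare.

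For the first two real Weil polynomials, you observe that $h(3) = 4$ in every case, so by Lang's theorem $\#\Pic^5(C)(\FF_2) = \#J(\FF_2) = 4$, while the vanishing $a_1 = a_2 = a_4 = 0$ forces every effective degree-$5$ divisor to be a single quintic closed point, and no class in $\Pic^5(\FF_2)$ can contain two such divisors without producing a $g^1_5$. Hence $a_5 \le 4$, which kills the candidates with $a_5 = 12$ and $a_5 = 8$ immediately. This is a correct and pleasantly elementary observation. The paper instead handles the first candidate (the one with $a_3 = 0$) via the gluing-exponent machinery of Howe--Lauter, producing a degree-$2$ map $C \to D$ to a curve with a rational point; and it handles the other two candidates by the same plane-septic search. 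Your divisor-counting argument dispatches the second candidate without any search, and replaces an appeal to the Howe--Lauter theory with Riemann--Roch and a Picard-group count. That is a real simplification of two of the three cases.

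For the last candidate ($a_5 = 4$, $a_3 = 2$, $a_6 = 1$), the counting is tight and your observations about the $2$-rank (indeed $f(T) \equiv T^{11}(T+1) \bmod 2$, so $J(\FF_2) \cong \ZZ/4$) and the behavior of $[D_1] - [D_2]$ are correct, but they do not yield a contradiction. At this point you correctly re-derive the existence of a basepoint-free $g^2_7$ with birational image a plane septic missing all rational points of $\PP^2$ --- this is exactly the paper's Proposition~\ref{prop:search_space} --- and then you fall back on the same kind of computer search the paper performs. So your route does not avoid the computational step; it only narrows the a priori target from two isogeny classes to one (which does not shrink the actual search space, since the search is over septic forms with prescribed nonvanishing and the isogeny class is only checked a posteriori). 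Your alternative suggestion of a Howe-style analysis of principally polarized varieties in the surviving isogeny class, as in Section~\ref{sec:genus9}, is plausible but unsubstantiated: the relevant resultant of $T^2 - 8$ and $T^4 - 3T^3 - 2T^2 + 8T - 2$ is $68 = 4 \cdot 17$, so the gluing exponent bound is not obviously small, and one would have to actually carry out the Grothendieck-group computation to see whether it closes the case. As written, the proposal proves the statement modulo the same computation the paper does, while giving a genuinely nicer treatment of the other two candidates.
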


Theorem~\ref{thm:Lauter_search} showed that if such a curve exists, then its
real Weil polynomial  must be among the following options:
\begin{displaycenter}
  \renewcommand{\arraystretch}{1.2} 
  \begin{tabular}{llrrrrrr}
    Factored real Weil polynomial                && $a_1$ & $a_2$ & $a_3$& $a_4$& $a_5$& $a_6$ \\  
    \midrule
   $(T - 2)  (T + 1)  (T^2 - 2T - 2)  (T^2 - 8)$ && $0$ & $0$ & $0$ & $0$ & $12$ & $4$ \\
   $(T^2 - 8)  (T^4 - 3T^3 - 2T^2 + 7T + 1)$     && $0$ & $0$ & $1$ & $0$ &  $8$ & $3$ \\
   $(T^2 - 8)  (T^4 - 3T^3 - 2T^2 + 8T - 2)$     && $0$ & $0$ & $2$ & $0$ &  $4$ & $1$ \\
  \end{tabular}
\end{displaycenter}

We treat the first case using off-the-shelf technology:
  
\begin{proposition}
  \label{prop:genus6case1}
  There is no curve of genus~$6$ over $\FF_2$ with $a_1 = a_2 = a_3 = a_4 = 0$. 
\end{proposition}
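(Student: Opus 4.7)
By Theorem~\ref{thm:Lauter_search}, imposing $a_3 = a_4 = 0$ on top of the already-required $a_1 = a_2 = 0$ singles out a unique real Weil polynomial among the three options, namely $(T-2)(T+1)(T^2-2T-2)(T^2-8)$. This forces $a_5 = 12$, $a_6 = 4$, and (evaluating the Weil polynomial at~$1$) $\#J(\FF_2) = 4$.

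Since $a_d = 0$ for $d \le 4$, the only effective $\FF_2$-rational divisors of degree~$5$ on $C$ are the $12$ single closed points of degree~$5$. These map into $\mathrm{Pic}^5(C)(\FF_2)$, which is nonempty and hence a torsor under $J(\FF_2)$ of cardinality~$4$, and the number of rational effective representatives in a class $[L]$ is $2^{h^0(L)} - 1$. Clifford's theorem gives $h^0(L) \le 3$ at $\deg L = 5$, so each class contributes $0$, $1$, $3$, or $7$ divisors; the only ways to partition $12$ into four such parts are $3+3+3+3$ and $7+3+1+1$. The former says $C$ carries four distinct $\FF_2$-rational $g^1_5$'s, each with its three $\PP^1(\FF_2)$-fibers accounting for three of the twelve degree-$5$ points. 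The latter says $C$ has a $g^2_5$ and hence realizes as a pointless smooth plane quintic.

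The main obstacle is eliminating both configurations; this is the ``off-the-shelf'' step alluded to in the introduction. The cleanest route is a direct lookup of the corresponding six-dimensional isogeny class over $\FF_2$, with Weil polynomial $(T^2-2T+2)(T^2+T+2)(T^4-2T^3+2T^2-4T+4)(T^4-4T^2+4)$, in the LMFDB, where its (non-)realization as a Jacobian is recorded. Short of that, the plane-quintic case yields to an exhaustive search over the $2^{21}-1$ nonzero ternary quintic forms, filtered by the condition that each of the seven rational lines of $\PP^2$ meets $C$ in a single degree-$5$ closed point. For the four-$g^1_5$ case I would exploit the involution $L \mapsto K \otimes L^{-1}$ on $\mathrm{Pic}^5(\FF_2)$: a Newton polygon calculation on the four Weil factors shows that $J$ has $2$-rank $1$ and hence $\#J[2](\FF_2) = 2$, so the involution has at most two fixed (theta-characteristic) points and must admit at least one dual pair $(L, K \otimes L^{-1})$. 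The associated product map $C \to \PP^1 \times \PP^1$ pulls $\mathcal{O}(1,1)$ back to $K$, realizing $C$ as the canonical curve projected from a line in $\PP^5$; analyzing its rational fiber structure against the known values of $a_5$ and $a_6$ should force an effective divisor of degree $\le 4$, contradicting the hypothesis.
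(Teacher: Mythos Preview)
Your opening analysis is correct and rather elegant: the identification of the unique real Weil polynomial, the computation $\#J(\FF_2)=4$, and the partition of the twelve degree-$5$ closed points among the four classes in $\operatorname{Pic}^5(C)(\FF_2)$ via $2^{h^0}-1\in\{0,1,3,7\}$ really do force exactly the two configurations you describe. The $g^2_5$ case genuinely does yield a pointless smooth plane quintic, since basepoint-freeness follows from the absence of effective divisors of degree at most~$4$, and a singular plane quintic would have geometric genus strictly less than~$6$.

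But from that point on you have not actually proved anything. The LMFDB does not record Jacobian membership for six-dimensional isogeny classes over~$\FF_2$, so that shortcut is unavailable. The plane-quintic search is feasible but is merely described, not carried out. And the four-$g^1_5$ case is only a sketch: you correctly compute that the $2$-rank is~$1$, so $J(\FF_2)\cong\ZZ/4\ZZ$ and the Serre-duality involution $L\mapsto K-L$ on $\operatorname{Pic}^5(C)(\FF_2)$ has at most two fixed points and hence at least one orbit $\{L,K-L\}$ of size two; and the product of the two associated pencils does pull the $(1,1)$-class back to~$K$. But the concluding sentence --- ``analyzing its rational fiber structure \dots\ should force an effective divisor of degree $\le 4$'' --- is not an argument. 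You have two degree-$5$ maps to~$\PP^1$, and there is no evident mechanism by which their interaction manufactures a divisor of smaller degree; indeed, every rational fibre of either pencil is already one of your twelve degree-$5$ points, and nothing smaller appears.

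The paper's proof is entirely different and much shorter. It factors the real Weil polynomial as $g_1g_2$ with $g_1=(T-2)(T+1)(T^2-8)$ and $g_2=T^2-2T-2$, and invokes the Howe--Lauter gluing machinery: the gluing exponent of the two isogeny factors $A_1,A_2$ divides~$2$, which forces $C$ to be a double cover of a curve $D$ whose real Weil polynomial is $g_1$ or~$g_2$. Either polynomial gives $\#D(\FF_2)>0$, so $C$ would inherit a closed point of degree~$1$ or~$2$, contradicting $a_1=a_2=0$. No case split on linear systems, no search.
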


\begin{proof}
  Suppose otherwise, and let $C$ be such a curve.  Our application of Lauter's
  algorithm implies that the Jacobian of $C$ has real Weil polynomial
  \[
  f(T) = (T - 2) (T + 1) (T^2 - 2T - 2) (T^2 - 8).
  \]
  Write $g_1(T) = (T-2)(T+1)(T^2-8)$ and $g_2(T) = T^2 - 2T - 2$. Then 
  $f = g_1g_2$, and the Jacobian of $C$ is isogenous to the product of abelian
  varieties $A_1, A_2$ with real Weil polynomials $g_1, g_2$, respectively. 
  By~\cite[Prop.~2.8, p.~178]{HoweLauter2012}, the ``gluing exponent'' of
  $A_1$ and $A_2$ divides $2$, so by~\cite[Thm.~2.2, p.~176]{HoweLauter2012}
  there is a double cover $C \to D$ for some curve $D$ whose Jacobian is 
  isogenous to $A_1$ or $A_2$, and whose real Weil polynomial is therefore $g_1$
  or~$g_2$. A curve with either of these real Weil polynomials has a rational
  point, which would imply that $C$ has either a rational or a quadratic point. 
  This contradiction completes the proof.
\end{proof}
  
For the two remaining real Weil polynomials, we make the following critical
observation: if such a curve exists then it must have a cubic point. 

\begin{proposition}
  \label{prop:search_space}
  Suppose there exists an excessive curve $C_{/\FF_2}$ of genus~$6$. Then $C$
  admits a singular plane model of degree $7$. Moreover, this model does not 
  pass through a rational point of the plane.
\end{proposition}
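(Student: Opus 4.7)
The plan is to build the degree-$7$ plane model from a cubic point on $C$, which is forced on us because $a_3 \ge 1$ in both of the surviving real Weil polynomial cases. Let $P$ be a closed point of degree~$3$ with associated effective divisor $\bar{P}$, and examine the complete linear system $\linsys{D}$ with $D \colonequals K - \bar{P}$; this has degree $2g - 2 - 3 = 7$, and Riemann--Roch gives $\ell(D) = \ell(\bar{P}) + 2$. Since $C$ has gonality~$7$, there is no pencil of degree~$3$, so $\ell(\bar{P}) = 1$ and $\dim \linsys{D} = 2$.

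Next I would show that $\linsys{D}$ is base-point-free and that the associated morphism $\varphi_D \colon C \to \PP^2$ is birational onto its image. Any positive base divisor $B$ would leave $\linsys{D - B}$ two-dimensional of degree at most~$6$, and a pencil inside such a system would yield a morphism to $\PP^1$ of degree at most~$6$, contradicting gonality~$7$. The image of $\varphi_D$ has dimension~$1$ and spans $\PP^2$ (else the three defining sections would satisfy a linear relation), so its degree in $\PP^2$ is at least~$2$; since $\deg \varphi_D \cdot \deg \varphi_D(C) = 7$ is prime, $\varphi_D$ is birational onto a plane curve $C'$ of degree~$7$, which is automatically singular because a smooth plane septic has arithmetic genus $15 \neq 6$.

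For the second assertion, I would argue by contradiction. Suppose some $Q \in \PP^2(\FF_2)$ lies on $C'$, and let $E$ be the common base divisor of the pullbacks $\varphi_D^*(L)$ as $L$ ranges over the pencil of lines through $Q$, with $e \colonequals \deg E$. Then $e \ge 1$ since $Q \in C'$, and $e \le 6$ because an irreducible plane curve of degree~$7$ has multiplicity at most~$6$ at any point. The lines through $Q$ pull back to a one-dimensional family of divisors in $\linsys{D}$ all containing $E$; stripping off $E$ and any further base divisor gives a base-point-free pencil on $C$ of degree at most $7 - e \le 6$, contradicting gonality~$7$. The main delicate step here is this last one: one must correctly identify $E$ as the divisorial contribution of $Q$ (which is where the bound $e \le 6$ enters) and verify that the resulting pencil really does drop in degree by the full amount, rather than collapsing the 1-parameter family of lines through $Q$ into a single divisor class.
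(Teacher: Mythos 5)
Your proof is correct and follows essentially the same route as the paper: take a cubic point (forced by the surviving real Weil polynomials), consider the residual system $\linsys{K - \bar{P}}$ of degree~$7$ and dimension~$2$, show it is base-point-free and birational onto a degree-$7$ plane curve, and exclude rational points on the image by projection. The only cosmetic differences are that you phrase the rational-point exclusion via pencils and base divisors rather than the paper's direct linear projection, and that you make explicit the (easy) observation that the model is necessarily singular because the arithmetic genus of a smooth plane septic is $15 \neq 6$.
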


\begin{proof}
  The above discussion shows that if there exists an excessive curve of 
  genus~$6$ over $\FF_2$, then it must have a cubic point. Let $D$ be the
  effective divisor of degree~$3$ that is simply supported on the Galois orbit
  of such a point. Write $K$ for a canonical divisor on $C$. Then 
  $\deg(K-D) = 7$, and we aim to show that the linear system $\linsys{K-D}$
  determines the plane model we seek.

  Riemann--Roch shows that
  \[
  \dim \linsys{K-D} = \dim \linsys{D} + 2.
  \]
  If $\dim \linsys{D} > 0$, then $\linsys{D}$ is a $g^1_3$, which would mean $C$
  has gonality at most~$3$, a contradiction. Thus $\dim \linsys{D} = 0$ and 
  $\dim \linsys{K - D} = 2$.

  We now show that $\linsys{K-D}$ is basepoint free. Since $K - D$ is defined
  over~$\FF_2$, so is the base locus. Write $E \ge 0$ for the base divisor. By
  definition, we have $\dim \linsys{K - D - E} = 2$. We also know that
  \[
    \deg(K- D - E) = 7 - \deg(E).
  \]
  If $\deg(E) > 0$, then a nonzero function in the Riemann--Roch space $L(K-D-E)$
  gives a morphism to $\PP^1$ of degree strictly smaller than $7$, which
  contradicts the fact that $C$ is excessive. Thus $\deg(E) = 0$ and $E = 0$.

  Let $\varphi \colon C \to \PP^2$ be the morphism induced by the linear system
  $\linsys{K-D}$, and let $C'$ be its image. Note that
  \[
  \deg(K-D) = 7 = \deg(C \to C') \deg(C').
  \]
  Since $\linsys{K-D}$ has dimension~$2$, the curve $C'$ cannot be a line. Hence 
  $\deg(C') = 7$, and $C'$ is birational to $C$.
  
  It remains to show that $C'$ does not pass through a rational point of the
  plane. Suppose otherwise, and let $P \in \PP^2(\FF_2)$ be such a point. Then
  we can project through $P$ to get a morphism $C' \to \PP^1$ of degree at
  most~$6$. But then the composition $C \to C' \to \PP^1$ has degree strictly
  smaller than $7$. This contradiction completes the proof.
\end{proof}

The proposition allows us to construct a search space for excessive curves of 
genus~$6$. The space of homogeneous septic polynomials has dimension
$\binom{7+2}{2} = 36$. We may insist that our polynomials do not vanish at a
rational point of the plane. Ordinarily, this is an open condition on the space.
But ``not vanishing'' is a closed condition on an $\FF_2$-vector space, so we
are able to leverage this information to cut the space down by $7$ additional
dimensions. We now describe how to do this in practice.

Suppose $F$ is a homogeneous septic polynomial over $\FF_2$ in $x,y,z$, and let
$C'$ be the plane curve it cuts out. We find that
\[
F \text{ includes the monomials $x^7, y^7$, and $z^7$.}
\]
Indeed, if $x^7$ is not present, then the point $\triple{1}{0}{0}$ lies on the
curve~$C'$. A similar argument applies to $\triple{0}{1}{0}$ and
$\triple{0}{0}{1}$. Now we can write
\[
F = x^7 + y^7 + z^7 + xyF_1(x,y) + xzF_2(x,z) + yzF_3(y,z) + xyzG(x,y,z),
\]
where
\begin{itemize}
\item $F_1,F_2,F_3$ are bivariate homogeneous polynomials of degree~$5$, and
\item $G$ is homogeneous in three variables of degree~$4$.
\end{itemize}
In order that $F(1,1,0) \ne 0$, we see that $F_1$ must have an odd number of
nonzero coefficients. Half of all bivariate polynomials have this property, so
there are $2^5$ choices for $F_1$. A similar count applies to $F_2$ and $F_3$
since $F$ cannot vanish at $\triple{1}{0}{1}$ or $\triple{0}{1}{1}$. Assuming we
have chosen such polynomials for the $F_i$, we find that in order that
$F(1,1,1) \ne 0$, $G$ must have an odd number of nonzero coefficients. The space
of polynomials of degree~$4$ has dimension $\binom{4+2}{2} = 15$, so there are
$2^{14}$ choices for $G$. It follows that there are $2^{29}$ choices for $F$ 
that do not vanish at a rational point of $\PP^2_{\FF_2}$.

We may now perform a search over all polynomials $F$ satisfying the above
constraints. We reject any polynomial $F$ that satisfies one of the following
additional properties:
\begin{itemize}
  \item $F$ vanishes at a point $P \in \PP^2(\FF_{16})$ and some partial
    derivative of $F$ does not vanish at $P$;
  \item $F$ does not vanish at either of $\triple{0}{1}{t}$ or 
    $\triple{1}{t}{t^2}$, where $t^3 + t + 1 = 0$.
\end{itemize}
The first property asserts that there is a smooth quadratic or quartic point $P$
on $C' = \{F = 0\}$. If that were to occur, then the normalization $C$ would 
have a quadratic or quartic point, in violation of 
Proposition~\ref{prop:vanishing}. We know that a hypothetical excessive curve of
genus~$6$ over $\FF_2$ must have a cubic point. Pushing that point down to the
plane model $C'$ would give a cubic point or a rational point. We have already
ruled out the possibility of the latter. The group $\PGL_3(\FF_2)$ acts
on~$\PP^2$. There are two orbits of cubic points: one contains
$\triple{0}{1}{t}$ and the other contains $\triple{1}{t}{t^2}$. So we may insist
that our curve contains at least one of these points, which justifies the second
property above.

We wrote C code to execute our search for homogeneous polynomials of degree~$7$
satisfying the above constraints. The search required about $2$ minutes on a
single CPU, and it found $110{,}770$ polynomials. We passed these polynomials
through a Magma script that looked for irreducible plane curves of genus~$6$ 
with no point of degree $1$, $2$, or~$4$. This took around $3$ minutes. No curve
survived.

The results of this search, combined with Proposition~\ref{prop:genus6case1},
provide a proof of Theorem~\ref{thm:genus6}.


\section{Curves of genus 7}
\label{sec:genus7}

We apply the same strategy as in the previous section, but there is one annoying
wrinkle to be ironed out. If $C$ is an excessive curve of genus~$7$ with an
effective divisor of degree~$4$, then $C$ can be realized as a singular plane
curve of degree~$8$. We treat this case in Section~\ref{sec:genus7_4}. Looking 
at Table~\ref{table:genus7} in Appendix~\ref{sec:genus7_classes}, we find that
all but three of the real Weil polynomials have $a_2 > 0$ or $a_4 > 0$. The
exceptions --- entries $3$, $4$, and $6$ --- cannot support an effective divisor
of degree~$4$. However, each of the exceptions has $a_3 > 0$. An excessive curve
of genus~$7$ with an effective divisor of degree~$3$ can be realized as a 
singular plane curve of degree~$9$. That case will be handled in
Section~\ref{sec:genus7_3}.

Combining Theorems~\ref{thm:genus7_3_to_go} and~\ref{thm:genus7_special} below,
we will obtain the desired result:

\begin{theorem}
  There is no curve of genus $7$ and gonality $8$ over $\FF_2$. 
\end{theorem}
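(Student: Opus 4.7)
The plan is to follow the blueprint of Section~\ref{sec:genus6}: for each candidate real Weil polynomial, extract a small effective divisor, use Riemann--Roch to produce a singular plane model of low degree, and exhaustively search the resulting finite space of plane curves. The complication is that the $79$ candidates of Table~\ref{table:genus7} split into two classes according to the smallest effective divisor available, forcing plane models of two different degrees, and the degree-$9$ search will be considerably larger than anything in the genus-$6$ argument.

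Let $C$ be a hypothetical excessive curve of genus~$7$ over $\FF_2$, with canonical divisor $K$ of degree~$12$. By Proposition~\ref{prop:vanishing} we have $a_1 = a_5 = a_2 a_3 = 0$. Inspection of Table~\ref{table:genus7} shows that every real Weil polynomial except entries~$3$, $4$, and~$6$ satisfies $a_2 > 0$ or $a_4 > 0$, so the corresponding curve admits an effective divisor of degree~$4$; the three exceptions have $a_3 > 0$ but $a_2 = a_4 = 0$. In the generic case, let $D$ be an effective divisor of degree~$4$. Gonality~$8$ combined with Lemma~\ref{lem:effective} forces $\dim \linsys{D} = 0$, so Riemann--Roch gives $\dim \linsys{K-D} = 2$ and $\deg(K-D) = 8$. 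The base-locus argument of Proposition~\ref{prop:search_space} carries over: a nontrivial base divisor $E$ would yield a nonconstant function in $L(K-D-E)$ of pole degree at most $8 - \deg E < 8$, contradicting gonality~$8$. Hence $\linsys{K-D}$ is basepoint-free and induces $\varphi\colon C \to \PP^2$ with $\deg(\varphi) \cdot \deg\varphi(C) = 8$; linear independence of the three defining sections rules out $\deg\varphi(C) = 1$, and the remaining possibilities $\deg\varphi \in \{2,4\}$ would produce a $g^1_4$ or a double-cover structure that I plan to exclude on a Weil-polynomial-by-Weil-polynomial basis. Thus $C$ is birational to a plane octic that avoids $\PP^2(\FF_2)$ (projection through a rational point would yield a pencil of degree~$\le 7$).

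For entries~$3$, $4$, $6$, let $D$ be the effective divisor of degree~$3$ attached to a cubic point. Then $\dim \linsys{D} = 0$, Riemann--Roch gives $\dim \linsys{K-D} = 3$ and $\deg(K-D) = 9$, and the same base-locus argument makes $\linsys{K-D}$ basepoint-free. The complete system embeds $C$ birationally as a nonic $C'' \subset \PP^3$, and projecting from any $\FF_2$-rational point of $\PP^3$ off $C''$ produces a pointless plane nonic birational to $C$. In both settings, the exhaustive search mirrors Section~\ref{sec:genus6}: forcing the model to vanish at none of the seven points of $\PP^2(\FF_2)$ imposes seven parity conditions on the monomial coefficients, the forbidden residue degrees of Proposition~\ref{prop:vanishing} and the required presence of a point of the chosen degree add further cuts, and the surviving polynomials are enumerated in optimized C, with survivors verified in Magma for geometric irreducibility, genus~$7$, point distribution, and gonality.

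The main obstacle is the degree-$9$ search. The space of plane nonics over $\FF_2$ has dimension $\binom{11}{2} = 55$, so after only the seven parity cuts one still faces $2^{48}$ candidates --- orders of magnitude larger than the $2^{29}$ search that sufficed in Section~\ref{sec:genus6}. Bringing the enumeration to a feasible size will require aggressive additional pruning: exploiting the $\PGL_3(\FF_2)$-action on $\PP^2$ to fix the location of the forced cubic point, using the vanishing of $a_2$, $a_4$, and $a_5$ to forbid smooth points over $\FF_4$, $\FF_{16}$, and $\FF_{32}$, and layering these constraints so that coefficients are determined in blocks rather than freely enumerated.
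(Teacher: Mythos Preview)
Your outline matches the paper's strategy almost exactly: the same split into the degree-$4$ and degree-$3$ cases, the same Riemann--Roch argument producing a basepoint-free $\linsys{K-D}$ of dimension~$2$ (or~$3$), the same exclusion of a rational point on the plane model via projection, and the same plan to sieve plane curves in~C and finish in Magma. For the octic case your handling of $\deg(C')\in\{2,4\}$ is the right idea; the paper rules out $\deg(C')=4$ by listing the four pointless smooth plane quartics over~$\FF_2$ and checking that none of their real Weil polynomials divides any entry of Table~\ref{table:genus7}, which is precisely the ``Weil-polynomial-by-Weil-polynomial'' check you allude to.

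The one genuine gap is in the degree-$9$ search. You correctly flag $2^{48}$ as the obstacle, but the mitigations you list (normalizing a single cubic point under $\PGL_3(\FF_2)$, filtering on smooth points over $\FF_4$, $\FF_{16}$, $\FF_{32}$) do not reduce the \emph{enumeration} enough: fixing one cubic point saves only a handful of bits, and the smooth-point filters are rejection tests applied inside the loop, not dimension cuts. The paper's key observation is that each of entries~$3$, $4$, $6$ has $a_3\ge 3$. With three distinct cubic divisors $D_1,D_2,D_3$, one analyzes the span of the corresponding functions $f_1,f_2,f_3\in L(K-D_1)$ and shows that, after a $\PGL_3(\FF_2)$ change of coordinates, the plane nonic can be taken to satisfy one of three explicit incidence conditions at specified cubic points (a triple point; a double point plus a simple cubic point on the same line; or two simple cubic points on distinct lines). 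These impose $18$, $12$, or $6$ linear conditions on the coefficients, bringing the three searches down to $2^{32}$, $2^{38}$, and $2^{42}$, which is what makes the computation tractable. Without this multiple-cubic-point leverage your nonic search, as described, would remain out of reach.
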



\subsection{An effective divisor of degree~4}
\label{sec:genus7_4}

We begin with some geometry.

\begin{proposition}
  Suppose there exists an excessive curve $C_{/\FF_2}$ of genus~$7$ with an 
  effective divisor of degree~$4$. Then $C$ admits a singular plane model of
  degree~$8$. Moreover, this model does not pass through a rational point of the
  plane.
\end{proposition}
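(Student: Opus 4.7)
The plan is to parallel the argument of Proposition~\ref{prop:search_space} using the degree-$4$ effective divisor $D$ in place of the cubic divisor that appeared in the genus-$6$ case. Write $K$ for a canonical divisor on $C$, so that $\deg(K-D) = 2g-2-4 = 8$ matches the target degree. Riemann--Roch gives
\[
\dim \linsys{K-D} = \dim \linsys{D} + \deg(K-D) + 1 - g = \dim \linsys{D} + 2.
\]
If $\dim \linsys{D} > 0$, then $\linsys{D}$ is a $g^1_4$ and forces the gonality of $C$ to be at most~$4$, contradicting excessiveness. Hence $\dim \linsys{D} = 0$ and $\dim \linsys{K-D} = 2$.

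Next I would verify that $\linsys{K-D}$ is basepoint free. If $E \ge 0$ is the (Frobenius-stable) base divisor, then $\dim \linsys{K-D-E} = 2$, and choosing a pencil inside this system produces a morphism $C \to \PP^1$ of degree at most $8 - \deg(E) \le 7$, contradicting gonality~$8$. So $E = 0$, and the complete linear system $\linsys{K-D}$ determines a morphism $\varphi\colon C \to \PP^2$ whose image $C'$ is nondegenerate (since $\linsys{K-D}$ is complete of dimension exactly~$2$) and satisfies $\deg(\varphi)\cdot\deg(C') = 8$. In particular $\deg(C') \in \{2,4,8\}$.

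The new difficulty compared to the genus-$6$ setting, where $\deg(K-D)=7$ was prime, is ruling out the intermediate factorizations. If $\deg(C') = 2$, then the normalization $\tilde{C'}$ is a smooth genus-$0$ curve over $\FF_2$, hence isomorphic to $\PP^1$, and the induced factorization $C \to \tilde{C'} \cong \PP^1$ has degree~$4$, contradicting gonality~$8$. The case $\deg(C')=4$ is the main obstacle: $C$ would be a double cover of a plane quartic $C'$, and if $C'$ is a smooth pointless plane quartic --- which does exist over $\FF_2$ by the theorem of Howe, Lauter, and Top --- then $\tilde{C'}=C'$ has gonality~$4$ over $\FF_2$ and composition gives only a degree-$8$ morphism, yielding no contradiction on its own. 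Ruling this case out will require extra input, such as comparing the ramification divisor of $C \to C'$ (linearly equivalent to $D$ by Riemann--Hurwitz, since $\deg(R) = 12 - 8 = 4$) against the vanishing conditions on the $a_d$ coming from Proposition~\ref{prop:vanishing} and Table~\ref{table:genus7}, or exhibiting an additional low-degree $g^1_k$ on $C$ forced by the double-cover structure.

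Once $\deg(C') = 8$ is established, $\varphi$ is birational onto~$C'$, and $C'$ must be singular because the arithmetic genus $\binom{7}{2} = 21$ of a smooth plane octic exceeds $g = 7$. For the final assertion, if $P \in \PP^2(\FF_2)$ lay on $C'$, then projecting $C'$ from $P$ and pulling back along the birational morphism $\varphi$ would produce a morphism $C \to \PP^1$ of degree at most~$7$, contradicting gonality~$8$. Apart from the $\deg(C')=4$ analysis flagged above, every other step is a direct transcription of the genus-$6$ argument with the new numerology.
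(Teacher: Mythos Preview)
Your outline is correct everywhere except the flagged $\deg(C')=4$ case, and there the paper's resolution is not along the lines you propose. First two easy reductions you glossed over: if the plane quartic $C'$ is singular then its normalization has genus $\le 2$ and hence gonality $\le 2$, so $C$ would have gonality $\le 4$; and if $C'$ is smooth but has a rational point then $C'$ has gonality~$3$, so $C$ would have gonality $\le 6$. Thus one may assume $C'$ is a smooth \emph{pointless} plane quartic, exactly the case you isolated.

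For that case the paper does not use ramification or produce an auxiliary $g^1_k$. Instead it invokes Tate's theorem: since $C\to C'$ induces a surjection $\Jac C\to\Jac C'$ up to isogeny, the real Weil polynomial of $C'$ must divide that of~$C$. One then lists the four isomorphism classes of pointless smooth plane quartics over $\FF_2$ (with real Weil polynomials $(T-2)(T^2-T-5)$, $(T-2)(T^2-T-4)$, $T^3-3T^2-4T+13$, and $(T-1)^3$) and checks that none of these divides any of the $79$ candidate real Weil polynomials from Table~\ref{table:genus7}. Your Riemann--Hurwitz suggestion is also shakier than it looks: over $\FF_2$ a degree-$2$ cover is wildly ramified, so the different has degree~$4$ but the local exponents are each $\ge 2$, and there is no linear-equivalence relation between the different and your original divisor~$D$. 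The divisibility-of-Weil-polynomials argument is the missing idea.
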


\begin{proof}
  Let $K$ be a canonical divisor on $C$ and $D$ an effective divisor of
  degree~$4$. An argument virtually identical to the one used to prove
  Proposition~\ref{prop:search_space} applies to show that the linear system 
  $\linsys{K - D}$ is basepoint free of degree~$8$ and dimension~$2$.  If we 
  write $C'$ for the image of $C$ under the morphism $C \to \PP^2$ determined by 
  $\linsys{K-D}$, then we find that
  \[
  \deg(K-D) = 8 = \deg(C \to C') \deg(C').
  \]
  We must argue that $\deg(C') = 8$.

  Since $\dim \linsys{K-D} = 2$, we find that $\deg(C') > 1$.  Write 
  $f \colon C \to \tilde C'$ for the induced morphism from $C$ to the 
  normalization $\tilde C'$ of $C'$. If the degree of $C'$ were~$2$, then 
  $C' = \tilde C'$ would be a smooth rational curve and the morphism $f$ would
  have degree~$4$. But the gonality of $C$ is~$8$, so this cannot occur.  If 
  instead the degree of $C'$ were~$4$, then $\deg(f) = 2$. Writing $g'$ for the 
  genus of $\tilde C'$, we see that $g' \le \frac{(4-1)(4-2)}{2} = 3$. If
  $g' \le 2$, then the gonality of $\tilde C'$ is at most $2$
  \cite[Prop.~2.1]{Faber_Grantham_GF2}. It would then follow that $C$ has 
  gonality at most $2\deg(f) = 4$, a contradiction. Therefore, $g' = 3$ and 
  $C' = \tilde C'$ is a smooth plane quartic. If $C'$ has a rational point, then
  it has gonality~$3$ \cite[Cor.~2.3]{Faber_Grantham_GF2}. Again we arrive at a
  contradiction because the gonality of $C$ would be at most $2\cdot 3 = 6$. 
  Thus $C'(\FF_2) = \varnothing$.

  To summarize, we are now assuming that $C'$ is a smooth plane quartic with no
  rational point, and $f \colon C \to C'$ has degree~$2$.  A direct computer
  search shows that there are $4$ isomorphism classes of smooth pointless plane
  quartics over $\FF_2$. Table~\ref{tab:plane_quartics} gives representative
  equations for these four classes, their number of points over $\FF_2, \FF_4,$
  and $\FF_8$, and their real Weil polynomials. By a result of Tate
  \cite[Thm.~1(b)]{Tate_Abelian_Varieties_Finite_Fields_1966}, the real Weil
  polynomial for $\tilde C'$ divides that of $C$. Looking through all of the
  entries in Appendix~\ref{sec:genus7_classes}, we find that none of those
  polynomials is divisible by any of the polynomials in
  Table~\ref{tab:plane_quartics}. We conclude that $C'$ cannot have degree~$4$.

  Thus, we have shown that $C'$ has degree~$8$, and the morphism $C \to C'$ is
  birational. To complete the proof, we note that $C'$ cannot pass through a
  rational point; otherwise we could project through that point to get a
  morphism $C \to C' \to \PP^1$ of degree strictly smaller than~$8$.
\end{proof}

\begin{table}[h!]
\renewcommand{\arraystretch}{1.2} 
\caption{The four isomorphism classes of pointless smooth plane quartic curves
  over $\FF_2$, along with their real Weil polynomial and numbers of
  closed points.}
\label{tab:plane_quartics}
\begin{tabular}{lrrrrr}
  \toprule
  Curve equation & Real Weil polynomial && $a_1$ & $a_2$ & $a_3$  \\
  \midrule
  $0 = x^{4} + x y^{3} + y^{4} + x y z^{2} + x z^{3} + y z^{3} + z^{4}$     & $(T-2)(T^2-T-5)$           &&    $0$ & $1$ & $1$ \\
  $0 = x^{4} + x y^{3} + y^{4} + x^{2} z^{2} + x y z^{2} + y z^{3} + z^{4}$ & $(T-2)(T^2-T-4)$           &&    $0$ & $2$ & $2$ \\
  $0 = x^{4} + x y^{3} + y^{4} + x^{3} z + x y z^{2} + y z^{3} + z^{4}$     & $T^3 - 3T^2 - 4T + 13$     &&    $0$ & $0$ & $1$ \\
  $0 = x^{4} + x^{2} y^{2} + y^{4} + x^{2} y z + x y^{2} z \ + $            & \multirow{2}{*}{$(T-1)^3$} &&   \multirow{2}{*}{$0$} & \multirow{2}{*}{$7$} & \multirow{2}{*}{$8$} \\
    \hspace{8em}$ x^{2} z^{2} + x y z^{2} + y^{2} z^{2} + z^{4}$ &&&&&\\
  \bottomrule  
\end{tabular}
\end{table}

We now spell out the differences between the genus-$6$ search and the related
genus-$7$ search. Octic homogeneous polynomials that do not vanish at any of the
rational points of the plane have the form
\[
F = x^8 + y^8 + z^8 + xyF_1(x,y) + xzF_2(x,z) + yzF_3(y,z) + xyzG(x,y,z),
\]
where
\begin{itemize}
  \item $F_1,F_2,F_3$ are bivariate homogeneous polynomials of degree~$6$, 
  \item $G$ is homogeneous in three variables of degree~$5$, and
  \item the $F_i$ and $G$ each has an odd number of nonzero coefficients.
\end{itemize}
There are $2^{38}$ polynomials satisfying these constraints. We loop over all
such polynomials and \textit{reject} any that satisfies one of the following
additional properties:
\begin{itemize}
  \item $F$ vanishes at a point $P \in \PP^2(\FF_{2^5})$ and some partial
    derivative of $F$ does not vanish at $P$;
  \item $F$ does not vanish at any of the points $\triple{0}{1}{s}$, 
    $\triple{1}{s}{s^2}$, or $\triple{0}{1}{s^2 + s}$, where $s^4 + s + 1 = 0$.
\end{itemize}
The first property asserts that the plane model $C'$ has a smooth quintic point,
which would mean $C$ has a smooth quintic point, a contradiction. By hypothesis,
we know $C$ has an effective divisor of degree~$4$, and hence a quadratic or
quartic point. It follows that $C'$ must pass through such a point. (Note that
the image of a quartic point may be a quadratic point on $C'$.) There are two
$\PGL_3(\FF_2)$-orbits of quartic points and one orbit of quadratic points;
representatives are $\triple{0}{1}{s}$, $\triple{1}{s}{s^2}$, and
$\triple{0}{1}{s^2 + s}$, where $s^4 + s + 1 = 0$.

We divided the space of $2^{38}$ such polynomials into $64$ tiles and 
distributed the search to $64$ CPUs.  The entire search required about $70$ 
minutes of wall time. Each tile generated around $2.2$ million polynomials for
further investigation.  We ran $64$ instances of our Magma script, one for each
tile; these required between $3$ and $4$ hours to complete. Out of the entire
pool of approximately $140$ million polynomials, only $606$ describe irreducible
genus-$7$ curves that have no quintic point and that do not simultaneously have
a quadratic and a cubic point. More precisely, we found
\begin{itemize}
  \item $248$ curves with $a_d$-sequence   $(0,2,0,3,0,3,12)$, and
  \item $358$ curves with $a_d$-sequence $(0,2,0,6,0,4,16)$.
\end{itemize}
Each of these curves has gonality~$4$. We determined this by searching for
divisors $D$ of degree~$2$ or~$4$ with $\dim \linsys{D} > 0$; such a divisor
must be a linear combination of Galois orbits of quadratic or quartic points. We
used Magma to verify the existence of these divisors and to compute the 
dimension of the relevant Riemann--Roch spaces. We summarize our findings thus
far:

\begin{theorem}
  \label{thm:genus7_3_to_go}
  If there is an excessive curve of genus~$7$ over $\FF_2$, then its real Weil
  polynomial is one of the following options from
  Table~\textup{\ref{table:genus7} in} Appendix~\ref{sec:genus7_classes}\,\textup{:}
  \begin{displaycenter}
  \renewcommand{\arraystretch}{1.2} 
  \begin{tabular}{clcrrrrrrr}
    \textup{No.} & \textup{Factored real Weil polynomial} && $a_1$ & $a_2$ & $a_3$& $a_4$& $a_5$& $a_6$& $a_7$  \\
    \midrule
$3$ & $T  (T^{6} - 3 T^{5} - 12 T^{4} + 39 T^{3} + 27 T^{2} - 126 T + 57)$      && $0$ & $0$ & $3$ & $0$ & $0$ & $13$ &  $9$ \\
$4$ & $T^{7} - 3 T^{6} - 12 T^{5} + 40 T^{4} + 24 T^{3} - 132 T^{2} + 75 T + 1$ && $0$ & $0$ & $4$ & $0$ & $0$ &  $9$ & $10$ \\
$6$ & $(T^{3} - T^{2} - 7 T + 6)  (T^{4} - 2 T^{3} - 7 T^{2} + 14 T - 2)$       && $0$ & $0$ & $5$ & $0$ & $0$ &  $9$ & $12$ \\
  \end{tabular}
  \end{displaycenter}
\end{theorem}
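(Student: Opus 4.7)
The plan is to reduce to a finite computation using Theorem~\ref{thm:Lauter_search} together with the preceding Proposition, and then to attack that computation by an efficient low-level search. First I would invoke Theorem~\ref{thm:Lauter_search} to narrow to the $79$ real Weil polynomials in Table~\ref{table:genus7}. Inspecting that table, all but three entries (numbers~$3$, $4$, and~$6$) satisfy $a_2 > 0$ or $a_4 > 0$; in either case, the corresponding curve carries an effective divisor of degree~$4$, obtained as twice a quadratic Galois orbit or as the orbit supporting a quartic point. The preceding Proposition then produces a singular plane model $C' \subset \PP^2$ of degree~$8$ that avoids $\PP^2(\FF_2)$, and the goal becomes to show that no such $C'$ corresponds to an excessive curve.

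Next I would parameterize the search space. Non-vanishing at the seven rational points of the plane forces $x^8,y^8,z^8$ to appear in $F$, and parity conditions on the coefficient vectors of the bivariate/trivariate blocks in
\[
F = x^8 + y^8 + z^8 + xy F_1(x,y) + xz F_2(x,z) + yz F_3(y,z) + xyz G(x,y,z)
\]
cut the candidate count to $2^{38}$. On top of this I would impose two rejection criteria coming from Proposition~\ref{prop:vanishing}: discard $F$ if $\{F=0\}$ has a smooth point of degree~$5$ (which would force $a_5 > 0$ on the normalization), and discard $F$ unless it vanishes at a representative of at least one of the $\PGL_3(\FF_2)$-orbits of quadratic or quartic points of $\PP^2$ (since the hypothesized degree-$4$ divisor on $C$ must push forward to a quadratic or quartic point of $C'$).

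Then I would implement the enumeration in C using bitwise operations over $\FF_2$, shard the search across many cores, and hand the surviving polynomials to Magma. Magma can check irreducibility, verify genus~$7$, and count closed points of small degree, throwing away anything that fails the remaining vanishing conditions of Proposition~\ref{prop:vanishing}. For each of the handful of curves that remain, I would enumerate all effective divisors $D$ of degree $\le 4$ supported on the quadratic and quartic Galois orbits and compute $\dim \linsys{D}$; if some such $D$ has $\dim \linsys{D} > 0$, then $C$ carries a $g^1_d$ with $d \le 4$ and in particular is not excessive. If every survivor is ruled out this way, only entries~$3$, $4$, and~$6$ of Table~\ref{table:genus7} can remain.

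The main obstacle is the raw size of the search: roughly $2.75 \times 10^{11}$ octics is hopeless for a naive sweep in Magma. Feasibility depends on expressing every filter as a fast bitwise operation over $\FF_2$, parallelizing the C loop aggressively, and front-loading as many rejection conditions as possible into the C pass so that Magma only sees a small residual pool. A secondary concern is making sure the quadratic/quartic vanishing constraint is imposed correctly up to $\PGL_3(\FF_2)$-equivalence, so that a representative of every relevant orbit is covered without over-counting; this is a bookkeeping issue rather than a conceptual one, but it controls whether the final list of survivors is both complete and small enough to analyze by hand in Magma.
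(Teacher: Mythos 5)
Your proposal is correct and tracks the paper's own argument in Section~\ref{sec:genus7_4} almost exactly: the same reduction to entries~$3$, $4$, $6$ via the observation that every other row of Table~\ref{table:genus7} has $a_2>0$ or $a_4>0$, the same octic parameterization with the same $2^{38}$ count, the same two rejection criteria (smooth quintic point; no quadratic or quartic point among the orbit representatives), and the same C-then-Magma pipeline ending with a direct gonality check via small-degree linear systems on the survivors. The one detail you invoke as a black box is the proposition that the plane model has degree~$8$ rather than, say,~$4$; in the paper that step needs a separate argument (via Tate's divisibility theorem against the four pointless plane quartics over $\FF_2$), but since you explicitly cite the preceding proposition for it, your proof sketch is sound.
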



\subsection{An effective divisor of degree~3}
\label{sec:genus7_3}

Again, we begin with some geometry.

\begin{proposition}
  \label{prop:degree9}
  Suppose there exists an excessive curve $C_{/\FF_2}$ of genus~$7$ with an 
  effective divisor of degree~$3$. Then $C$ admits a singular plane model of
  degree~$9$. Moreover, this model does not pass through a rational point of the
  plane.
\end{proposition}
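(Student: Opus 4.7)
The plan is to mirror Proposition~\ref{prop:search_space}: produce a basepoint-free linear system of degree~$9$ on $C$, map $C$ birationally onto a curve in projective space, and project to a plane model avoiding all rational points. The new complication is that Riemann--Roch here yields a linear system of dimension~$3$ rather than~$2$, so we first map to $\PP^3$ and then project to $\PP^2$; the vanishing conditions $a_1 = a_2 = 0$ inherited from Theorem~\ref{thm:genus7_3_to_go} are what make the projection step work.

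For the setup, let $K$ be a canonical divisor, so $\deg(K - D) = 9$. Gonality $8 > 3$ forces $\dim\linsys{D} = 0$, and Riemann--Roch then gives $\dim\linsys{K-D} = 3$. The same pencil argument used in Proposition~\ref{prop:search_space} shows the base divisor $E$ of $\linsys{K-D}$ satisfies $\deg E \leq 1$, and $a_1 = 0$ rules out $\deg E = 1$. The resulting morphism $\phi \colon C \to \PP^3$ has non-degenerate image $\tilde C'$, and $\deg(\phi) \cdot \deg(\tilde C') = 9$ leaves only $\deg(\tilde C') \in \{3, 9\}$. The case $\deg(\tilde C') = 3$ would make $\tilde C'$ a twisted cubic, yielding a $g^1_3$ on $C$ and contradicting the gonality; hence $\phi$ is birational onto a non-degenerate curve $\tilde C' \subset \PP^3$ of degree $9$.

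The heart of the argument is to show that no rational line in $\PP^3$ meets $\tilde C'$. Suppose $\ell$ is such a line. The pencil of hyperplanes containing $\ell$ pulls back along $\phi$ to a pencil of degree-$9$ divisors on $C$ whose common fixed part is $F \colonequals \phi^*(\ell \cap \tilde C')$; removing $F$ yields a basepoint-free $g^1_{9 - \deg F}$, so gonality forces $\deg F \leq 1$. But since $a_1 = a_2 = 0$, every closed point of $C$ has degree at least $3$, so $\deg F \geq 3$. This contradiction shows $\ell \cap \tilde C' = \varnothing$, and in particular $\tilde C'(\FF_2) = \varnothing$.

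To produce the plane model, choose any $Q \in \PP^3(\FF_2)$; necessarily $Q \notin \tilde C'$. Projection from $Q$ restricts to a morphism $\tilde C' \to \PP^2$ of some degree $k$ with $k \cdot \deg \pi_Q(\tilde C') = 9$. Non-degeneracy of $\tilde C'$ rules out $k = 9$ (which would force $\tilde C'$ to lie in a plane), and $k = 3$ would make $\pi_Q(\tilde C')$ a plane cubic whose normalization has gonality at most~$2$; the induced morphism from $C$ to that normalization has degree $3$, so composing gives $C \to \PP^1$ of degree at most $6$, contradicting gonality. Hence $k = 1$, the image is a plane curve of degree~$9$ birational to $C$, and it passes through a rational point of $\PP^2$ iff some rational line through $Q$ meets $\tilde C'$, which we have ruled out. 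Finally, a smooth plane curve of degree~$9$ has arithmetic genus $\binom{8}{2} = 28 \neq 7$, so this plane model is necessarily singular. I expect the main obstacle is the middle step: correctly tracking the fixed-part degree of the pencil and invoking $a_1 = a_2 = 0$.
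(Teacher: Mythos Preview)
Your proof is correct and shares the paper's overall scaffolding: show $|K-D|$ is basepoint free of dimension~$3$, map to~$\PP^3$, project to~$\PP^2$, and use gonality to rule out low-degree images. The genuine difference is in the final step. The paper projects first and then argues directly on the plane model: a rational point of $C'$ would have to be singular (since $C\to C'$ is the normalization and $C$ has no rational point), and projecting from a singular point of multiplicity $r\ge 2$ in~$\PP^2$ yields a map $C\to\PP^1$ of degree $9-r\le 7$, contradicting gonality. You instead establish the stronger intermediate fact that no $\FF_2$-rational \emph{line} in $\PP^3$ meets $\tilde C'$, via the pencil-of-hyperplanes argument; the absence of rational points on any projection then follows for free. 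Your route has the pleasant side effect of cleanly justifying that a projection center off the curve exists, which the paper simply asserts parenthetically; the paper's route is slightly more economical in that it argues about degree only once (in $\PP^2$) rather than twice (in $\PP^3$ and again in~$\PP^2$).

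One small remark on the step you flagged: the appeal to $a_2 = 0$ is valid---either via Theorem~\ref{thm:genus7_3_to_go} or, more directly, because the hypothesis forces $a_3>0$ and Proposition~\ref{prop:vanishing} gives $a_2 a_3 = 0$---but it is not actually needed. Since $a_1=0$, any nonzero effective $\FF_2$-divisor on $C$ already has degree at least~$2$, which is enough to contradict $\deg F\le 1$.
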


\begin{proof}
  Let $D$ be an effective divisor of degree~$3$ and let $K$ be a canonical
  divisor. Then $\dim \linsys{K-D} \ge \dim \linsys{K} - 3 = 3$. If this
  dimension were larger than~$3$, we would have
  \[
    \dim \linsys{K - 2D} \ge \dim \linsys{K-D} - \deg(D) \ge 1,
  \]
  which means $C$ admits a nonconstant morphism to $\PP^1$ of degree 
  $\deg(K-2D) = 6$, a contradiction. We conclude that $\linsys{K-D}$ has 
  degree~$9$ and dimension~$3$.

  Now we argue that $\linsys{K-D}$ is basepoint free. Let $E \ge 0$ be its base
  divisor. Then $\dim \linsys{K - D - E} = 3$ and $\deg(K-D-E) = 9 - \deg(E)$. 
  If $\deg(E) \ge 2$, then $L(K-D-E)$ contains a nonconstant rational function
  of degree at most~$7$, a contradiction. If $\deg(E) = 1$, then $E$ is 
  supported at a rational point, which an excessive curve cannot have. Thus, 
  $E = 0$.

  The linear system $\linsys{K-D}$ determines a morphism $C \to \PP^3$. 
  Composing this with projection through a rational point of $\PP^3$ (which does
  not lie on~$C$), we obtain a morphism to $\PP^2$. Write $C'$ for the image in
  the plane. Then
  \[
  \deg(K-D) = 9 = \deg(C \to C')\deg(C').
  \]
  The curve $C'$ does not lie in a line, so its degree is bigger than~$1$. If
  $\deg(C') = 3$, then $C'$ has genus $0$ or~$1$, and so gonality at most~$2$.
  But then $\deg(C \to C') = 3$, and composing $C \to C'$ with a morphism to 
  $\PP^1$ of minimum degree would show that $C$ has gonality at most~$6$, a
  contradiction. So $C'$ has degree~$9$, as desired.

  Finally, suppose that $C'$ passes through a rational point $P$ of the
  plane. Since $C$ has no rational point, and since $C \to C'$ is the
  normalization morphism, we see that $P$ cannot be smooth. Suppose that $P$ has
  multiplicity $r \ge 2$ on $C'$. Then linear projection through $P$ gives rise
  to a morphism $C \to \PP^1$ of degree $9-r \le 7$, which contradicts the fact
  that $C$ has gonality~$8$. We conclude that $C'$ cannot pass through a rational
  point.
\end{proof}

Following the proposition, our first attempt might be to search through plane
curves defined by homogeneous polynomials of degree~$9$. The space of such
polynomials has $\FF_2$-dimension $\binom{9+2}{2} = 55$. As in the case of
degree-$6$ curves, we can avoid looking at polynomials that vanish at a rational
point of the plane, which cuts the space down to $2^{48}$ polynomials. This is
much too large to be efficiently searched, so we need an additional improvement.
We will leverage the fact that curves in the three remaining isogeny classes 
have at least three cubic points.

\begin{proposition}
  \label{prop:genus7_special}
  Suppose there exists an excessive curve $C_{/\FF_2}$ of genus~$7$ with at
  least three distinct closed points of degree~$3$. Write $\FF_8 = \FF_2(t)$,
  where $t^3 + t + 1 = 0$. Then $C$ is birational to a plane curve of degree~$9$
  that passes through no rational point of the plane, and that satisfies at 
  least one of the following\textup{:}
  \begin{itemize}
  \item $C$ has a triple point at $\triple{0}{1}{t}$\textup{;}
  \item $C$ has a double point at $\triple{0}{1}{t}$ and passes through
            $\triple{0}{1}{t+1}$\textup{;} or
  \item $C$ passes through $\triple{0}{1}{t}$ and $\triple{1}{0}{t}$.
  \end{itemize}
\end{proposition}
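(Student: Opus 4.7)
The plan is to combine Proposition~\ref{prop:degree9} with the $\PGL_3(\FF_2)$-action on cubic closed points of $\PP^2_{\FF_2}$. Fix three distinct cubic points $P_1, P_2, P_3$ of $C$, and let $D$ be the degree-$3$ effective divisor supported on the Galois orbit of $P_1$. Proposition~\ref{prop:degree9} gives a plane model $C' \subset \PP^2$ of degree~$9$ that meets no rational point of the plane. The normalization map $C \to C'$ sends each cubic point $P_i$ to a closed point $Q_i$ of $C'$; since $C'$ has no rational points and the residue field of $P_i$ is $\FF_8$, each $Q_i$ must itself be a cubic closed point.

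A direct count of $\FF_2$-lines and $\FF_8$-points shows that $\PP^2_{\FF_2}$ has exactly two $\PGL_3(\FF_2)$-orbits of cubic closed points: a \emph{collinear} orbit of $14$ points consisting of the two cubic points on each of the seven $\FF_2$-lines, with representative $\triple{0}{1}{t}$; and a \emph{non-collinear} orbit of $8$ points, with representative $\triple{1}{t}{t^2}$. The two collinear cubic points on the $\FF_2$-line $x = 0$ are precisely $\triple{0}{1}{t}$ and $\triple{0}{1}{t+1}$, while $\triple{1}{0}{t}$ lies on the different line $y = 0$. Thus the three conditions in the proposition correspond to the following collinear configurations: a single cubic point (condition~1), a pair on the same $\FF_2$-line (condition~2), and a pair on distinct $\FF_2$-lines (condition~3).

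The proof proceeds by case analysis on the multiset $\{Q_1, Q_2, Q_3\}$. If all three coincide, the common image is a point of multiplicity $\geq 3$ on $C'$; if exactly two coincide, there is a point of multiplicity $\geq 2$ together with a distinct cubic point; and if all three are distinct, then since each $\FF_2$-line carries at most two collinear cubic points, at least two of the $Q_i$ must lie on different $\FF_2$-lines. In each sub-case, applying an appropriate element of $\PGL_3(\FF_2)$ moves the relevant cubic points into the standard positions $\triple{0}{1}{t}$, $\triple{0}{1}{t+1}$, or $\triple{1}{0}{t}$, yielding one of the three conditions.

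The principal obstacle is the possibility that the cubic points in question land in the non-collinear orbit, since no element of $\PGL_3(\FF_2)$ moves a non-collinear point to $\triple{0}{1}{t}$. I would address this by exploiting the flexibility in the plane model construction: varying the cubic point used to define $D$ among $P_1, P_2, P_3$, and varying the rational point in $\PP^3$ through which we project from the embedding $\varphi_{K-D}\colon C \to \PP^3$. A careful counting or genus-formula argument should confirm that at least one such choice yields a plane model whose relevant cubic point images lie in the collinear orbit, so that the case analysis goes through and one of the three listed conditions must hold.
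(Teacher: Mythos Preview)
Your proposal has a genuine gap, and it is precisely the one you flag as the ``principal obstacle'': nothing in your argument prevents the images $Q_i$ from landing in the non-collinear $\PGL_3(\FF_2)$-orbit (representative $\triple{1}{t}{t^2}$), and no amount of varying the projection center in $\PP^3$ or permuting the $P_i$ will obviously fix this. Your suggestion to resolve it by ``a careful counting or genus-formula argument'' is not a proof; I do not see how such an argument would go, and in any case the paper does not argue this way.

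The paper sidesteps the orbit problem entirely by \emph{choosing the map to $\PP^2$ strategically} rather than taking the generic one from Proposition~\ref{prop:degree9}. Since $\dim\linsys{K-D_1-D_j}=0$ for $j=1,2,3$ (otherwise $C$ would have gonality $\le 6$), there are unique effective $E_j$ with $K-D_1\sim D_j+E_j$, yielding functions $f_j\in L(K-D_1)$. If the $f_j$ span a $1$-dimensional space, then using this common $f$ as a coordinate function forces all three $D_j$-images onto a single $\FF_2$-line~$\ell$; since $\ell$ carries only two cubic closed points, the images cannot be distinct, giving the triple-point or double-point-plus-collinear-point configuration. If the $f_j$ span $\ge 2$ dimensions, one uses two independent $f_j$ as coordinate functions, forcing two of the $D_j$-images onto the two distinct coordinate lines $x=0$ and $y=0$; a direct check that the stabilizer of this pair of lines acts transitively on pairs of cubic points (one per line) then gives the third configuration. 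The point is that by building the $D_j$ into the coordinate functions, their images are \emph{automatically} on $\FF_2$-lines, so the non-collinear orbit never enters the picture.
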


\begin{proof}
By hypothesis, $C$ has three effective divisors $D_1,D_2,D_3$ of degree~$3$ with
pairwise disjoint support. Let $K$ be an effective canonical divisor. The proof 
of Proposition~\ref{prop:degree9} shows that $\dim \linsys{K - D_1} = 3$. For
$j = 1,2,3$, we also see that
\[
   \dim \linsys{K - D_1 - D_j} = 0,
\]
since otherwise this linear system would give a morphism to $\PP^1$ of
degree~$6$. It follows that there are unique effective divisors $E_1, E_2, E_3$
of degree~$6$ such that
\begin{align*}
  K - D_1 &\sim D_1 + E_1 \\
          &\sim D_2 + E_2 \\
          &\sim D_3 + E_3.
\end{align*}
Let $f_1, f_2, f_3 \in L(K-D_1)$ be rational functions corresponding to these
three linear relations; the $f_i$ are unique up to a constant. 

Suppose first that the $f_i$ generate a subspace of $L(K-D_1)$ of dimension~$1$.
Then there is a single function $f$ on $C$ with
\[
\div(f) = 2D_1 + E_1 - K = D_1 + D_2 + E_2 - K = D_1 + D_3 + E_3 - K.
\]
Since the $D_i$'s have disjoint support, we see that $\div(f) = 2D_1 + D_2 +
D_3 - K$. Using $f$ as one of the coordinate functions for our map $C \to
\PP^2$, we see that there is a line $\ell \subset \PP^2$ that passes through
the images of $D_1$, $D_2$, and $D_3$. Let $C'$ be the image of $C$ in $\PP^2$,
and let $P_1, P_2, P_3$ be the images of the cubic points in $C'$ corresponding
to $D_1, D_2, D_3$, respectively. None of the $P_i$ is rational since $C'$
cannot pass through a rational point of the plane. Moreover, the $P_i$ cannot
all be distinct, for otherwise the line $\ell$ would contain three distinct
cubic closed points. (An $\FF_2$-rational line contains three rational points
and two cubic closed points.) Thus, either $C'$ has a triple point at $P_1$, or
$C'$ has a double point at $P_1$ and a second cubic point on the
line~$\ell$. Since $\PGL_3(\FF_2)$ acts transitively on the set of~14 cubic
closed points of the plane that lie on some $\FF_2$-rational line, we may
assume without loss that we are in one of the first two cases of the
proposition.

Now suppose that the $f_i$ generate a subspace of $L(K-D_1)$ of dimension at
least~$2$. We use two linearly independent $f_i$ as coordinate functions for our
map $C \to \PP^2$. Keeping the notation of the previous paragraph, it follows
there are distinct lines $\ell_1$ and $\ell_2$ containing cubic points $P_1$ and
$P_2$ of $C'$. After applying a linear transformation of the plane, we may
assume that our lines are $\ell_1 = \{x = 0\}$ and $\ell_2 = \{y = 0\}$. The
subgroup of $\PGL_3(\FF_2)$ that fixes these two lines is given by
\[
\begin{pmatrix} 1 & 0 & 0 \\ 0 & 1  & 0 \\ * & * & 1 \end{pmatrix} \cong \ZZ/2\ZZ \times \ZZ/2\ZZ.
\]
One can verify directly that this group acts faithfully on the set of $4$ cubic
closed points on $\ell_1 \cup \ell_2$. It follows that there is some
transformation mapping $P_1$ to the Galois orbit of $\triple{0}{1}{t}$ and $P_2$
to the Galois orbit of $\triple{1}{0}{t}$. This completes the proof.
\end{proof}

The preceding proposition allows us to restrict our attention to homogeneous
polynomials of degree~$9$ over $\FF_2$ with certain extra vanishing conditions.
In each of the three cases, we obtain a number of linear constraints on the
coefficients of our polynomials: $18$, $12$, and $6$, respectively. Starting
with a space of dimension~$55$, this means we have three search spaces to
consider, consisting of $2^{37}$, $2^{43}$, and $2^{49}$ polynomials, 
respectively. Using linear algebra, we can produce three explicit bases for the 
spaces of polynomials with these vanishing conditions. Unlike the basis of
monomials used in Section~\ref{sec:genus7_4}, a naive choice of new basis may
not be well conditioned for efficiently searching for polynomials that do not
vanish at a rational point. For completeness, we write down explicit bases of
polynomials that \emph{are} well conditioned in Appendix~\ref{sec:bases} and
argue that they have the properties we want. In the first two cases, avoiding
polynomials that vanish at a rational point cuts $5$ dimensions off the search
space; in the third case it cuts $7$ dimensions off the search space. In 
summary, this means we will have three searches consisting of $2^{32}$, 
$2^{38}$, and $2^{42}$ polynomials, respectively.

\begin{table}
\caption{Search resources and wall time for the three cases of Proposition~\ref{prop:genus7_special}.}
\label{tab:summary}
\setlength{\tabcolsep}{0.8em}
\begin{tabular}{cccrcc}
  \toprule
  Case &   CPUs  &                     C search time &        Survivors &                Magma search time & Survivors \\
  \midrule
   $1$ & $\pz 1$ & \phantom{$11$h $45$m}\llap{$37$m} &      $162{,}552$ & \phantom{$1$h $44$m}\llap{$43$m} &      $24$ \\
   $2$ &    $64$ & \phantom{$11$h $45$m}\llap{$54$m} &  $5{,}314{,}648$ & \phantom{$1$h $44$m}\llap{$28$m} &   $\pz 0$ \\
   $3$ &    $64$ &                       $11$h $45$m & $75{,}877{,}946$ &                      $1$h $44$m  &      $40$ \\
  \bottomrule
\end{tabular}
\end{table}

Table~\ref{tab:summary} describes the search resources, wall time, and number of
survivors for the three searches we performed, one for each case of
Proposition~\ref{prop:genus7_special}. As expected, the C search time for
Cases~$2$ and~$3$ are approximately $64\times$ and $1024\times$ that of 
Case~$1$, respectively. For Case~$1$, the $24$ curves that survived the Magma 
search were equally split between the first and third isogeny classes of
Theorem~\ref{thm:genus7_3_to_go}. For Case~$3$, twelve of the curves that
survived fell into the first isogeny class of Theorem~\ref{thm:genus7_3_to_go},
while the remainder fell into the third isogeny class of the theorem.  Using 
Magma, we verified that all $64$ of the survivors have gonality~$6$ by producing
an effective divisor $D$ of degree~$6$ with $\dim \linsys{D} > 0$. In summary:

\begin{theorem}
  \label{thm:genus7_special}
  There is no curve of genus~$7$ and gonality~$8$ over $\FF_2$ that admits at
  least~$3$ closed points of degree~$3$. \qed
\end{theorem}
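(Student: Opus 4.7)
My plan is to settle this by a direct computational search organized according to Proposition~\ref{prop:genus7_special}. That proposition reduces the problem to exhibiting, in each of three explicit cases, a homogeneous polynomial $F \in \FF_2[x,y,z]$ of degree~$9$ whose plane model $\{F = 0\}$ is birational to an excessive curve of genus~$7$. The three cases impose $18$, $12$, and $6$ independent linear conditions on the $55$-dimensional space of degree-$9$ forms, yielding affine search spaces $V_1, V_2, V_3$ of sizes $2^{37}$, $2^{43}$, and $2^{49}$, respectively.

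For each $V_i$, I would choose a basis adapted to the ``$F$ does not vanish at any rational point of $\PP^2_{\FF_2}$'' constraint: among the seven rational points of the plane, some are already forced off the curve by the prescribed vanishing conditions themselves, and for each remaining rational point $P$ evaluation at $P$ is a linear functional that can be split off as an additional coordinate set to $1$. This cuts the spaces to $2^{32}$, $2^{38}$, and $2^{42}$ candidates. A bit-packed C enumeration then evaluates each $F$ and its partial derivatives at every point of $\PP^2(\FF_{2^r})$ for the relevant $r$, rejecting $F$ as soon as the plane model exhibits a smooth quadratic or smooth quintic point; since the hypothesis $a_3 \ge 3$ is in force, either would violate $a_2 a_3 = 0$ or $a_5 = 0$ from Proposition~\ref{prop:vanishing}.

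The handful of survivors from each search would be passed through a Magma script that computes the normalization, verifies its genus is~$7$, and confirms the $a_d$-sequence lies in one of the three remaining isogeny classes of Theorem~\ref{thm:genus7_3_to_go}. For each such candidate I would then hunt for an effective divisor $D$ of degree at most~$6$ with $\dim \linsys{D} > 0$, searched among nonnegative integer combinations of Galois orbits of cubic and quartic points (which are abundant in all three target isogeny classes). Producing such a $D$ certifies gonality at most $6 < 8$ by Riemann--Roch, contradicting excessiveness and finishing the theorem.

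The main obstacle is Case~$3$, whose $2^{42}$ candidates sit at the edge of what is feasible; this demands serious engineering (precomputed evaluation tables over $\FF_{32}$, packed coefficient vectors, branch-free inner loops) and distribution across dozens of cores. A subtle secondary point is the basis choice for the $V_i$: a naive monomial basis can make the ``not vanishing at $P$'' linear functionals entangle awkwardly with the coordinates being swept in the outer loops, forcing re-evaluation of those functionals at every step and wrecking the throughput. The basis must instead be engineered so that the avoidance functionals for the rational points project cleanly onto a small set of coordinates, allowing the inner loop to iterate only over coefficient combinations that already satisfy the no-rational-zero condition.
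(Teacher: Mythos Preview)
Your proposal matches the paper's argument almost exactly: the same three-case reduction via Proposition~\ref{prop:genus7_special}, the same search-space sizes $2^{32}$, $2^{38}$, $2^{42}$ after the rational-point cut, the same C-then-Magma pipeline, and the same certification of gonality at most~$6$ on the survivors. One slip to fix: you propose building the witness divisor from cubic and quartic points and call both ``abundant,'' but all three target isogeny classes in Theorem~\ref{thm:genus7_3_to_go} have $a_4 = 0$; the degree-$6$ divisors should instead be assembled from pairs of cubic orbits $D_3 + D_3'$ (or from sextic points, which \emph{are} plentiful in these classes).
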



\section{Genus 9}
\label{sec:genus9}

Theorem~\ref{thm:Lauter_search} shows that if there is an excessive curve $C$ of
genus~$9$ over $\FF_2$, then its real Weil polynomial and counts of closed 
points of various degrees are as below.
\begin{displaycenter}
\begin{tabular}{ccrrrrrrrrrr}
  Factored real Weil polynomial          && $a_1$ & $a_2$ & $a_3$& $a_4$& $a_5$& $a_6$& $a_7$& $a_8$& $a_9$\\
  \cmidrule[\lightrulewidth]{1-1}\cmidrule[\lightrulewidth]{3-11}
  $(T+1)(T^4 - 2T^3 - 6T^2 + 10T + 1)^2$ &&  $0$  &   $4$ &  $0$ &  $0$ &  $0$ &  $8$ &  $0$ & $18$ & $64$ \\
\end{tabular}
\end{displaycenter}
In particular, note that $C$ has a closed point of degree~$6$.  If we took the
tack of Section~\ref{sec:genus6}, we could show that a hypothetical excessive
curve of genus~$9$ over $\FF_2$ admits a singular plane model of degree~$10$ 
with no rational point. There are $2^{59}$ such curves that we would need to
search. Even after imposing vanishing conditions at the four quadratic points,
we would not be able to shrink the search space below about $2^{50}$
polynomials. This is far too large for us to handle, so instead we will use a
different method: By analyzing the principally polarized varieties with the
given real Weil polynomial, we can show that there is no genus-$9$ curve over
$\FF_2$ (of any gonality) with the given real Weil polynomial. Our argument uses
results and techniques from~\cite{Howe1995}, \cite{Howe_Lauter_2003},
and~\cite{HoweLauter2012}; an expository overview of many of these results can
be found in~\cite{Howe_Serre_proceedings}.

\begin{theorem}
\label{T:genus9}
Let $h_1 = T + 1$ and let $h_2 = T^4 - 2T^3 - 6T^2 + 10T + 1$. Then there is no 
curve over $\FF_2$ whose real Weil polynomial is equal to $h_1 h_2^2$.
\end{theorem}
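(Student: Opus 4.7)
My plan is to show that no abelian variety in the isogeny class with real Weil polynomial $h_1 h_2^2$ is a Jacobian, independent of any gonality assumption. First, a direct check shows $h_2$ is irreducible over $\QQ$: the rational root theorem rules out linear factors since $h_2(\pm 1) \ne 0$, and a short case analysis on the constant term $bd = 1$ rules out factorizations into two monic integer quadratics. Hence the Jacobian of any such curve would be isogenous to $E \times A^2$, where $E$ is the elliptic curve over $\FF_2$ with real Weil polynomial $h_1$ (so $\#E(\FF_2) = 4$) and $A$ is a simple abelian fourfold with real Weil polynomial $h_2$.

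The first attack would use the Howe--Lauter gluing machinery, just as in Proposition~\ref{prop:genus6case1}. Since $\gcd(h_1, h_2) = 1$, Proposition~2.8 of~\cite{HoweLauter2012} bounds the gluing exponent of $E$ and $A^2$ in terms of the resultant $\mathrm{Res}(h_1, h_2) = h_2(-1) = -12$. If this exponent divides a small integer $n$, Theorem~2.2 of~\cite{HoweLauter2012} produces a cover $C \to D$ of degree dividing $n$ with $\mathrm{Jac}(D)$ isogenous to $E$ or to $A^2$. The case $D \cong E$ is ruled out by combining Riemann--Hurwitz with point counts: the four rational points of $E$ force low-degree points on $C$ incompatible with the vanishing $a_1(C) = 0$. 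The case $\mathrm{Jac}(D) \sim A^2$ is ruled out by verifying, via Lauter's algorithm as in Section~\ref{sec:Lauter}, that no genus-$4$ curve over $\FF_2$ has real Weil polynomial $h_2^2$.

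If the gluing exponent is too large for the above reduction, I would fall back on the Deligne-module analysis of the third author's thesis and~\cite{Howe1995}. The isogeny class is ordinary (the constant term of $h_1 h_2^2$ is $1$, coprime to~$2$), so Deligne's equivalence parameterizes the abelian varieties in the class by lattices in $\QQ[T]/(f)$, where $f$ is the full Weil polynomial. The endomorphism algebra of $A^2$ is $M_2(K)$, where $K$ is the degree-$8$ CM field whose maximal totally real subfield is $\QQ[T]/(h_2)$; principal polarizations on $E \times A^2$ then correspond to the unique principal polarization on $E$ together with a positive-definite rank-$2$ Hermitian form over an order in $K$. I would enumerate these PPAVs up to isomorphism and, for each, apply the decomposability criterion of~\cite{Howe_Lauter_2003} or~\cite{HoweLauter2012} to show that the PPAV splits nontrivially as a product of lower-dimensional PPAVs and hence is not the Jacobian of a smooth geometrically integral curve.

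The main obstacle will be the Hermitian-form enumeration: the multiplicity-$2$ factor $h_2^2$ produces a matrix-algebra endomorphism structure and, in principle, many inequivalent PPAVs. However, the finiteness of the relevant ideal class group, the positivity constraints on the Hermitian forms, and the small degree and discriminant of $K$ should make the calculation tractable by computer algebra.
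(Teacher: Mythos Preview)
Your proposal names the right tools---the Howe--Lauter gluing machinery and the polarization analysis of~\cite{Howe1995}---but the way you combine them has a real gap.

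On the first attack: Theorem~2.2 of~\cite{HoweLauter2012} is the wrong lever here, because it produces a double cover only when the gluing exponent divides~$2$, whereas the resultant~$-12$ only bounds it by~$12$. The paper proceeds differently: it writes $0 \to \Delta \to E \times A \to \Jac C \to 0$ with $A$ the \emph{eight}-dimensional variety having real Weil polynomial $h_2^2$, observes that $\Delta \cong E[d]$ for some $d \mid 12$, and then invokes Lemma~4.3 of~\cite{HoweLauter2012} to obtain a degree-$d$ map $C \to E$ for that specific~$d$. Point counts on $C$ (using $a_1 = a_3 = a_4 = 0$ and $a_2 = 4$, not merely $a_1 = 0$) then force $d \in \{6,12\}$. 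As a side remark, your alternative target ``$\Jac(D) \sim A^2$'' would have genus~$8$, not~$4$, and is already excluded by Riemann--Hurwitz for a double cover from genus~$9$.

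The fallback is where the genuine problem lies. You propose to enumerate all PPAVs in the isogeny class and show each is a nontrivial product. This will fail: the attainable-class computation in~\cite{Howe1995} shows that the obstruction in $\calB(R)$ \emph{vanishes} for the kernels $E[2]$ and $E[4]$, so polarizations on $A$ with those kernels do exist, and the resulting glued PPAVs have no reason to decompose. The paper's argument avoids this precisely by using the curve structure---the point-count step---to discard $d \le 4$ \emph{before} any polarization analysis. What remains is to show that no polarization of $A$ has kernel $E[6]$ or $E[12]$, and this is done by computing $\calB(R) \cong \ZZ/2\ZZ$ and verifying that the prime $\pp_3 = (3,\pi+\pibar+1)$ of $\Rplus$ has nonzero image there; since both $[\Mod_R E[6]]$ and $[\Mod_R E[12]]$ involve $[\Rplus/\pp_3]$ with odd multiplicity (while the $2$-part lies in the kernel), neither class is attainable. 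Your Hermitian-form enumeration is not needed, and by itself would not close the argument.
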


\begin{proof}
Let $E_{/\FF_2}$ be the unique elliptic curve with real Weil polynomial~$h_1$.
Suppose there were a curve $C_{/\FF_2}$ with real Weil polynomial $h_1 h_2^2$. 
Since the resultant of the two polynomials $h_1$ and $h_2$ is~$-12$, we see 
from~\cite[Lemma~7]{Howe_Lauter_2003} (or its 
generalization~\cite[Lemma~2.3]{HoweLauter2012}; see 
also~\cite[Props.~3.7 and~3.10]{Howe_Serre_proceedings}) that there is an
abelian variety $A_{/\FF_2}$ with real Weil polynomial~$h_2^2$ and a finite 
$12$-torsion group scheme $\Delta$ such that the Jacobian of $C$ fits into an 
exact sequence
\[
0 \to \Delta \to E\times A \to \Jac C \to 0,
\]
and where the induced maps $\Delta\to E$ and $\Delta\to A$ are both embeddings.
Furthermore, the pullback to $E \times A$ of the canonical principal 
polarization of $\Jac C$ is the product $\lambda \times \mu$ of a polarization
$\lambda$ of $E$ and a polarization $\mu$ of $A$, and the images of $\Delta$ in
$E$ and in $A$ are the kernels of $\lambda$ and $\mu$. 

If we let $\lambda_0$ be the canonical principal polarization of~$E$, then every
polarization of $E$ is of the form $d\lambda_0$ for a positive integer~$d$. 
Since the group scheme $\Delta$ is killed by $12$, it follows that $\Delta$ is
isomorphic to $E[d]$ for a divisor $d$ of~$12$.

From~\cite[Lemma~4.3]{HoweLauter2012} (see
also~\cite[Prop.~4.13]{Howe_Serre_proceedings}) we see that there must be a
nonconstant degree-$d$ map from $C$ to~$E$. It follows that the number of closed
points of degree at most $d$ on $C$ must be at least the number of rational
points of~$E$, because each rational point of $E$ has at least one closed point
of $C$ lying over it, and the degree of each such closed point is at most~$d$. 
Since $E$ has $5$ rational points, while $C$ has no closed points of degree~$1$,
$3$, or~$4$ and only $4$ of degree~$2$, we see that $d>4$. That leaves open only
two possibilities: $d=6$ or $d=12$.

We claim that neither of these possibilities can occur, because there is no
polarization of $A$ whose kernel is isomorphic to either $E[6]$ or $E[12]$. To 
prove this, we use the results of~\cite{Howe1995}, which give us information on
the group schemes that occur as kernels of polarizations of abelian varieties 
in a given ordinary isogeny class. Given a finite multiset of finite simple
group schemes, the results tell us whether there is a polarization of a variety
in the given isogeny class such that the composition series of the kernel of the
polarization is equal to the given multiset.

Let $\pi$ denote the Frobenius endomorphism of $A$ and let $\pibar$ denote its
dual isogeny (the \define{Verschiebung}). Honda--Tate theory~\cite{Tate1971} 
shows that the minimal polynomial of the endomorphism $\pi+\pibar$ is~$h_2$, and
since $\pibar = 2/\pi$, we find that the minimal polynomial of $\pi$ is 
$f\colonequals T^4 \, h_2(T + 2/T)$. Let $R$ denote the subring 
$\ZZ[\pi,\pibar]$ of $\End A$. Then $K\colonequals R\otimes\QQ$ is the number
field defined by~$f$, and we can view $R$ as an order in this field. The field
$K$ is a CM~field, and its maximal real subfield $\Kplus$ is $\QQ(\pi+\pibar)$.
We let $\Rplus$ denote the subring $\ZZ[\pi+\pibar]$ of~$R$.

Let $G$ be a finite group scheme that is a subgroup scheme of an abelian variety
isogenous to~$A$, so that the Frobenius and Verschiebung endomorphisms $F$ and 
$V$ of $G$ satisfy $f(F) = f(V) = 0$. We associate to $G$ a finite $R$-module 
$\Mod_R G$ as follows. First, we write $G$ as a product 
$G_\rr\times G_\rl\times G_\lr$,  where $G_\rr$ is a reduced group scheme with 
reduced dual, $G_\rl$ is a reduced group scheme with local dual, and $G_\lr$ is
a local group scheme with reduced dual. (Every finite group scheme over a finite
field can be written as a product of three such group schemes together with 
another factor that is local with local dual (\cite[p.~17]{Manin1963}, 
\cite[Cor., p.~52]{Waterhouse1979}), but for group schemes that can be embedded
in an ordinary abelian variety, the local-local factor is trivial; this follows
from the fact that, geometrically, the $p$-torsion of an ordinary abelian
variety is isomorphic to a power of the reduced-local group scheme 
$\boldmu_p$~\cite[\S2]{Deligne1969}.) Let $k$ be an algebraic closure 
of~$\FF_2$. Let $M_\rr$ and $M_\rl$ be the finite $R$-modules whose underlying
abelian groups are $G_\rr(k)$ and $G_\rl(k)$, respectively, and whose $R$-module
structure is such that $\pi$ acts as $F$ and $\pibar$ acts as~$V$. Let 
$\hat{G}_\lr$ be the Cartier dual of $G_\lr$, and let $M_\lr$ be the finite 
$R$-module whose underlying abelian group is $\hat{G}_\lr(k)$ and where $\pi$
acts as $V$ and $\pibar$ acts as~$F$. Finally, we take 
$\Mod_R G = M_\rr\oplus M_\rl \oplus M_\lr$.

The \define{Grothendieck group} $\calG(S)$ of an order $S$ in a number field is 
the group obtained from the free abelian group on finite $S$-modules by dividing
out by the subgroup generated by the expressions $[M'] - [M] - [M'']$, for every 
exact sequence $0\to M\to M' \to M''\to 0$ of finite $S$-modules. The group 
$\calG(S)$ is free, with a basis given by the classes of the simple $S$-modules.

The main result of~\cite{Howe1995} tells us exactly which elements of the 
Grothendieck group of $R$ contain the classes $[\Mod_R G]$ of the $R$-modules 
corresponding to kernels $G$ of polarizations of varieties isogenous to~$A$.
(Such classes are called \define{attainable}.) To state the result, we need to 
define a certain quotient of the group $\calG(\Rplus)$.

Every finite $R$-module is also a finite $\Rplus$-module, and an exact sequence
of $R$-modules is also an exact sequence of $\Rplus$-modules. Thus there is a
homomorphism $N_{R/\Rplus}\colon\calG(R)\to\calG(\Rplus)$. Also, for every 
nonzero element $\alpha$ of $\Rplus$ we have a finite $\Rplus$-module 
$\Rplus/\alpha\Rplus$. We let $\calB(R)$ denote the quotient of $\calG(\Rplus)$ 
by the subgroup generated by the image of $N_{R/\Rplus}$ and by the classes of 
the modules $\Rplus/\alpha\Rplus$, where we let $\alpha$ range over the 
\define{totally positive} elements of $\Rplus$ --- that is, the elements that
are positive under every embedding of $\Rplus$ into the real numbers. Since 
$R = \Rplus[\pi]$ is free of rank~$2$ as an $\Rplus$-module, if we take any
finite $\Rplus$-module $M$ we find that 
$N_{R/\Rplus}([M\otimes_{\Rplus} R]) = 2[M]$, so the group $\calB(R)$ is
$2$-torsion.

We define an element $I\in\calB(R)$ associated to the isogeny class of $A$ as
in~\cite[Def.~5.4]{Howe1995}. Since $\calB(R)$ is $2$-torsion and since $A$ is
isogenous to the square of a simple abelian variety, the formula 
in~\cite[Def.~5.4]{Howe1995} shows that $I = 0$.

Theorem~5.6 of~\cite{Howe1995} says that a class of $\calG(R)$ is attainable if
and only if it is of the form $[M\otimes_{\Rplus} R]$ for an $\Rplus$-module $M$
such that the image of the class $[M]\in\calG(\Rplus)$ under the quotient map
$\calG(\Rplus)\to\calB(R)$ is equal to~$I$. (Note that if a class in $\calG(R)$
is of the form $[M\otimes_{\Rplus}R]$ for an $\Rplus$-module $M$, then the class
$[M] \in \calG(\Rplus)$ is unique. Indeed, the map 
$T\colon\calG(\Rplus) \to \calG(R)$ given by 
$[M] \mapsto [M \otimes_{\Rplus} R]$ is a homomorphism because $R$ is flat 
over~$\Rplus$, and this homomorphism is injective because its composition with 
the norm from $\calG(R)$ to $\calG(\Rplus)$ is the multiplication-by-$2$ map 
on~$\calG(\Rplus).$) To complete our proof of Theorem~\ref{T:genus9}, then, it
will be enough for us to show that both $\Mod_R E[6]$ and $\Mod_R E[12]$ are of
the form $M\otimes_\Rplus R$ for an $\Rplus$-module $M$ whose image in
$\calB(R)$ is \emph{not} equal to $I$; that is, the image of $M$ in $\calB(R)$
should be nonzero.

Let $\qq_3$ be the prime ideal $(3,\pi+\pibar + 1)$ of $R$, let $\qq_2$ be the 
prime ideal $(2,\pi-1) = (2,\pibar)$ of $R$, and let $\qqbar_2$ be the complex 
conjugate $(2,\pibar-1) = (2,\pi)$ of $\qq_2$. We check that $E[6]$ and $E[12]$ 
can both be embedded in~$A$, and that
\begin{align*}
[\Mod_R E[6]]  = [\Mod_R E[3]] + \phantom{2}[\Mod_R E[2]]  &= [R/\qq_3]+ \phantom{2}[R/\qq_2] + \phantom{2}[R/\qqbar_2]\\
[\Mod_R E[12]] = [\Mod_R E[3]] + 2[\Mod_R E[2]]            &= [R/\qq_3]+          2 [R/\qq_2] +          2 [R/\qqbar_2].
\end{align*}

Let $\pp_3$ be the prime ideal $(3, \pi+\pibar + 1)$ of $\Rplus$ and let $\pp_2$
be the prime ideal $(2,\pi+\pibar + 1)$ of $\Rplus$. We check that
\begin{align*}
(\Rplus/\pp_2)\otimes_\Rplus R &\cong R/\qq_2 \oplus R/\qqbar_2\\
(\Rplus/\pp_3)\otimes_\Rplus R &\cong R/\qq_3,
\end{align*}
so \cite[Thm.~5.6]{Howe1995} says that $[\Mod_R E[6]]$ will be attainable if and
only if the image of $[\Rplus/\pp_3] + [\Rplus/\pp_2]\in\calG(\Rplus)$ in
$\calB(R)$ is equal to zero, and that $[\Mod_R E[12]]$ will be attainable if and
only if the image of $[\Rplus/\pp_3] + 2[\Rplus/\pp_2]$ in $\calB(R)$ is zero.

As a first step, we note that $N_{R/\Rplus}([R/\qq_2]) \cong [\Rplus/\pp_2]$, so 
$[\Rplus/\pp_2]$ is in the kernel of the map $\calG(\Rplus)\to\calB(R)$. Thus,
to complete the proof of the theorem, all we must show is that the image of 
$[\Rplus/\pp_3]$ in $\calB(R)$ in nonzero. Our next step, then, is to compute
the group $\calB(R)$.

Let $\OO$ be the maximal order of~$K=\QQ(\pi)$ and let $\OOplus$ be the maximal
order of~$\Kplus$. Computing the discriminants of $\OO$ and $\OOplus$, we find 
that no finite prime of $\OOplus$ ramifies in~$\OO$, so
\cite[Prop.~10.2]{Howe1995} tells us that $\calB(\OO) \cong \Gal(K/\Kplus)$, and
that under this isomorphism the map $\calG(\OOplus)\to\calB(\OO)$ is essentially
the Artin map: if $\frakP$ is a prime of $\OOplus$, then the class of 
$\OOplus/\frakP$ in $\calG(\OOplus)$ is sent to $0$ in $\calB(\OO)$ if and only
if $\frakP$ splits in $K/\Kplus$.

The inclusion map $i\colon R\to \OO$ gives us a pullback map 
$i^*\colon\calB(\OO)\to\calB(R)$, and~\cite[Prop.~10.5]{Howe1995} gives us two 
finite $2$-torsion groups $D_\rms$ and $C_\rms$, maps from these groups to 
$\calB(\OO)$ and $\calB(R)$, respectively, and a map $D_s\to C_s$ such that the
diagram
\[
\xymatrix{
D_\rms\ar[rr]\ar[d]   && \calB(\OO)\ar[d]^{i^*}\\
C_\rms\ar[rr]         && \calB(R)
}
\]
is a pushout diagram.

We see from~\cite[pp.~2386--2387]{Howe1995} that the group $C_\rms$ is an
$\FF_2$-vector space with a basis consisting of symbols $x_\pp$ for the singular
primes $\pp$ of $\Rplus$ that are inert in $R/\Rplus$. We compute that the 
discriminant of $\Rplus$ is $16$ times the discriminant of~$\OOplus$, so the 
only singular prime of $\Rplus$ is the unique prime of $\Rplus$ that lies
over~$2$, namely $\pp_2$. Since $\pp_2$ splits in $R/\Rplus$, the group $C_\rms$
is trivial. 

To compute $D_\rms$ we again use the definition found 
in~\cite[pp.~2386--2387]{Howe1995} together with~\cite[Rem.~10.6]{Howe1995}. We 
find that $D_\rms$ is an $\FF_2$-vector space with a basis consisting of symbols
$y_\frakP$ for the primes $\frakP$ of $\OOplus$ that are inert in $\OO/\OOplus$
and such that $\frakP\cap\Rplus$ lies under a singular prime of~$R$. 
Using~\cite[Prop.~9.4]{Howe1995} we compute that the discriminant of $R$ is 
$2^{16}\cdot 3^6\cdot 11^2$, and since the discriminant of $\OO$ is 
$2^{8}\cdot 3^6\cdot 11^2$, the singular primes of $R$ all lie above~$\pp_2$.
There is only one prime of $\OOplus$ that lies over~$2$, and it splits in
$\OO/\OOplus$. Therefore $D_\rms$ is also trivial.

Since the diagram above is a pushout diagram, we find that the map
$i^*\colon\calB(\OO)\to\calB(R)$ is an isomorphism. 

To compute the image of $\Rplus/\pp_3$ in $\calB(R)$, we use another pushout
diagram that relates $\calB(\OO)$ and $\calB(R)$, namely the one found 
in~\cite[Prop.~10.4]{Howe1995}, which is
\[
\xymatrix{
\calG(\OOplus)/N_{\OO/\OOplus}(\calG(\OO))\ar[rr]\ar[d]^N && \calB(\OO)\ar[d]^{i^*}\\
\calG(\Rplus)/N_{R/\Rplus}(\calG(R))      \ar[rr]         && \calB(R).
}
\]
Here the map $N$ on the left side of the diagram is induced from the map
$\calG(\OOplus)\to\calG(\Rplus)$ that takes the class of a finite
$\OOplus$-module $M$ to the class of $M$ viewed as an $\Rplus$-module.

Let $\frakP_3$ be the prime $(3, \pi+\pibar + 1)$ of $\OOplus$; this is the 
unique prime of $\OOplus$ over~$3$. Since $\Rplus$ is nonsingular at~$3$, the 
restriction of $\frakP_3$ to $\Rplus$ is $\pp_3$, and the map $N$ in the diagram
takes the class of $\OOplus/\frakP_3$ in the upper-left group to the class of 
$\Rplus/\pp_3$ in the lower-left group. Since $i^*$ is an isomorphism, the image
of $\Rplus/\pp_3$ in $\calB(R)$ is nonzero if and only if the image of 
$\OOplus/\frakP_3$ in $\calB(\OO)$ is nonzero. We check that $\frakP_3$ is inert
in $K/\Kplus$, so the image of $\OOplus/\frakP_3$ in $\calB(\OO)$ is nonzero. 
Therefore the image of $\Rplus/\pp_3$ in $\calB(R)$ is nonzero, and the theorem 
is proved.
\end{proof}


\section{Enumerating all excessive curves}
\label{sec:exhaustion}

As we noted in Section~\ref{sec:345}, an excessive curve over $\FF_q$ of 
genus~$3$ is precisely a nonhyperelliptic curve of genus $3$ with no rational 
points (a ``pointless curve''), and a result of Howe, Lauter, and 
Top~\cite[Thm.~1.1]{HLT} says that such curves exist over $\FF_q$ if and only if
$q\le 23$ or $q = 29$ or $q = 32$. In this section we sketch the method we used
to enumerate all such curves over these fields. In total, there are $215$
excessive genus-$3$ curves over these fields; the counts over each finite field
were given in Table~\ref{tab:excessive} in the Introduction.\footnote{
   Note that the curve $(x^2 + xz)^2 + (y^2 + yz)(x^2 + xz) + (y^2 + yz)^2 + z^4 = 0$
   is excessive as a curve over $\FF_2$ \emph{and} as a curve over $\FF_{32}$,
   but we count it twice in our enumeration because we consider curves to be
   schemes over a base field.
}   
Together with the excessive curve of genus~$4$ over $\FF_2$ defined by the two 
equations $x y + z^2 + z w + w^2 = 0$ and $x^3 + y^3 + z^3 + y^2 w + x z w = 0$
in $\PP^4$~\cite[Thm.~5.4]{Faber_Grantham_GF2} and the excessive curve of 
genus~$4$ over $\FF_3$ defined by the two equations $x y + z^2 + w^2 = 0$ and
$x^3 + y^3 + y z^2 + x w^2 - y w^2 - z w^2 = 0$ in 
$\PP^4$~\cite[Thm.~5.3]{Faber_Grantham_GF34}, this gives a total of exactly
$217$ excessive curves over finite fields. 

A human- and computer-readable Magma file, \texttt{AllGenus3.magma}, containing
a list of all $215$ of the excessive genus-$3$ curves over finite fields, is 
available with the other programs associated with this paper.

For $q\le 5$, we used a simple brute force search to find representatives of the
$\PGL(2,\FF_q)$ orbits of pointless plane quartics. We find unique
representatives of each orbit by explicitly calculating the orbits.  We
confirmed our computations for $q=2$ and $q=3$ by checking the LMFDB
database~\cite{LMFDB}, which includes data collected from every curve of
genus~$3$ over $\FF_2$ and~$\FF_3$.

For $7\le q\le 23$, we used a more efficient brute force search that depends on
the curves having enough quadratic points. Note that the Weil bound shows that
if $q\ge 7$, then a genus-$3$ curve over $\FF_q$ has at least 
$q^2 + 1 - 6q\ge 8$ points over $\FF_{q^2}$. If the curve is also pointless 
over~$\FF_q$, then it must have at least $4$ quadratic points, and this will be
enough for our method to work. Let us explain the idea.

Let $s_1$ and $s_2$ be distinct elements of $\FF_{q^2}$ that are conjugate 
over~$\FF_q$. We let $P_1$ be the quadratic point of $\PP^2$ whose geometric
points are $\triple{s_1}01$ and $\triple{s_2}01$, and we let $P_2$ be the 
quadratic point of $\PP^2$ whose geometric points are $\triple{0}{s_1}{1}$ and 
$\triple{0}{s_2}{1}$. If $f\in\FF_q[x,y,z]$ is a homogeneous quartic, we say
that $f$ is \define{pinned} (with respect to $P_1$ and $P_2$) if it vanishes at
$P_1$ and~$P_2$.

Suppose $C_{/\FF_q}$ is an excessive curve of genus~$3$ that has at least three 
quadratic points. We identify $C$ with one of its plane quartic models, so that
$C$ is given by a homogeneous quartic~$f\in \FF_q[x,y,z]$. Each quadratic point
of $C$ determines a rational line in the plane, and at most two quadratic points
of $C$ can lie on a given line because $C$ is defined by a quartic. Since we are
assuming that $C$ has at least three quadratic points, we can choose two such
points that determine distinct rational lines. Then there are exactly four 
automorphisms of~$\PP^2$ that take the two chosen quadratic points to the 
quadratic points $P_1$ and $P_2$, respectively. By applying such an
automorphism, we can assume that $C$ passes through $P_1$ and~$P_2$; that is, we
may assume that $f$ is a pinned quartic. 

The space of plane quartics (up to scalars) is $14$-dimensional. By limiting 
ourselves to pinned quartics, we cut our search space down to $10$ dimensions.
We can further increase our efficiency in looping through this search space by 
using the fact that we only want to consider pointless quartics~$f$. For 
example, since $f(1,0,0)$ and $f(0,1,0)$ and $f(0,0,1)$ are all supposed to be
nonzero, we see that the coefficients of $x^4$, $y^4$, and $z^4$ in $f$ are all
nonzero. Going further: We can choose the collection of coefficients of $x^4$,
$x^3y$, $x^2y^2$, $xy^3$, and $y^4$ all at once, and use the requirement that 
$f(a,b,0)\ne 0$ for all $a, b\in\FF_q$ to limit the possible values of these 
coefficients. We implemented our search using these ideas as often as possible.

Suppose $f \in \FF_q[x,y,z]$ is a homogeneous quartic that defines an excessive
curve~$C$. It is a straightforward matter to produce a list of all pinned
quartics $g$ that define a curve isomorphic to~$C$: Simply loop through all of
the pairs $(Q_1,Q_2)$ of non-colinear quadratic points on~$C$, and for each such
pair write down the four pinned quartics obtained by moving $Q_1$ to $P_1$ and
$Q_2$ to~$P_2$. There are at most $(q^2 + 6q + 1)/2$ quadratic points on~$C$,
and so there are roughly $q^4$ pinned quartics defining~$C$. For the prime
powers $q$ that we are considering, we can easily list all of these quartics.
If we order them --- say, by number of nonzero terms, and lexicographically
within that grouping --- we can then identify the first one. This ``first pinned
quartic'' is a well-defined normal form for~$C$ that allows us to quickly test
whether two pointless quartics are isomorphic to one another.

The basic idea of our enumeration strategy is to run through all pointless
pinned quartics $f$ using the method described above. If such an $f$ defines a
curve of genus~$3$ (that is, if $f$ is geometrically irreducible and defines a
nonsingular curve), then we compute its ``first pinned quartic'' normal form and
add this to our list of excessive curves, if it is not already on the list.

We wrote Magma code to implement this algorithm, and ran it for all $q\le 16$.
Running in Magma 2.26-10 on a $3.2$GHz Apple M1~Max, the case $q = 16$ took just
under $90$ core-hours. To double-check our work and to reach the larger values
of~$q$, we rewrote the code in C as well, and ran it on a cluster of Intel Xeon
E5-2699v3 CPUs. This allowed us to reach all $q \le 23$. See
Table~\ref{tab:quartic_timing} for resources and timing for this computation.

\begin{table}[h!]
\caption{Search resources and wall time for finding excessive plane quartic
  curves.}
\label{tab:quartic_timing}
\begin{tabular}{r@{\qquad}cccr@{\qquad}cc}
  \toprule
  $q$ &  CPUs & Wall time (m)  &\hbox to 1em{}&  $q$ & CPUs & Wall time (m)  \\
  \cmidrule(lr){1-3}                \cmidrule(r){5-7}
   $7$ &  $1$ &  \pz$0$  &&    $16$ & \pz$ 1$ &    $281$  \\    
   $8$ &  $1$ &  \pz$0$  &&    $17$ &    $16$ &  \pz$90$  \\
   $9$ &  $1$ &  \pz$1$  &&    $19$ &    $64$ &    $113$  \\
  $11$ &  $1$ &  \pz$7$  &&    $23$ &    $64$ &    $778$  \\  
  $13$ &  $1$ &    $42$  &&         &         &           \\
  \bottomrule
\end{tabular}
\end{table}

We estimate that it would take about $10{,}000$ CPU-hours to do this calculation
for $q=29$, and about $30{,}000$ CPU-hours for $q=32$. Fortunately, there are
other methods available for these cases.

For $q = 29$, we use Lauter's algorithm to show that if there is a genus-$3$
curve $C_{/\FF_{29}}$ with no points, then its real Weil polynomial is 
$(T-10)^3$. Suppose there is such a $C$, and let $J$ be its Jacobian. There are
two elliptic curves over $\FF_{29}$ that have real Weil polynomial $T-10$:
the curve $E_1\colon y^2 =x^3 + x$, whose endomorphism ring  is isomorphic
to $\ZZ[i]$, where $i^2 = -1$, and the curve $E_2\colon y^2 = x^3 + 12x^2 + 4x$,
whose endomorphism ring isomorphic to $\ZZ[2i]$. From~\cite[Thm.~2]{Kani2011}
we see that $J$ is isomorphic (as an abelian threefold without polarization) to 
one of the products
\[
E_1\times E_1\times E_1, \quad E_1\times E_1\times E_2, \quad
E_1\times E_2\times E_2, \quad \text{and}\quad E_2\times E_2\times E_2.
\]
Let $A \colonequals E_1\times E_1\times E_1.$ Then there is an isogeny 
$\varphi\colon A\to J$ of degree $1$, $2$, $4$, or $8$, depending on which of 
the four products $J$ is isomorphic to. The principal polarization on $J$ pulls
back to a polarization $\lambda$ on $A$ of degree $1$, $4$, $16$, or~$64$. This 
polarization can be represented by a Hermitian matrix $M\in M_3(\ZZ[i])$ of 
determinant $1$, $2$, $4$, or~$8$. Then~\cite[Table~2, p.~190]{HoweLauter2012}
shows that there is an embedding $\psi\colon E_1\to A$ such that $\psi^*\lambda$
is either the principal polarization on $E_1$ or twice the principal 
polarization on~$E_1$. We see from~\cite[Lemma~4.3]{HoweLauter2012} that then
there must be a map $C\to E_1$ of degree $1$ or~$2$. A degree-$1$ map would be
impossible, so $C$ must be a double cover of~$E_1$. It is a straightforward 
matter to enumerate all of the genus-$3$ double covers of~$E_1$; see 
\cite[\S6.1]{Howe_Lauter_2003} for a detailed description of the similar
calculation of all genus-$4$ double covers of an elliptic curve over a finite
field. We find that up to isomorphism, there is exactly one pointless plane
quartic over~$\FF_{29}$, already listed in~\cite[Table~2]{HLT}: the curve 
$x^4 + y^4 + z^4 = 0$.

For $q=32$, we note that it was already proven in~\cite[\S3.3]{Elkies1999} that
there is exactly one pointless plane quartic over~$\FF_{32}$ (and that this
curve is the reduction modulo~$2$ of a twist of the Klein quartic).


\nocite{HoweLauter2007} 

\bibliographystyle{hplaindoi}
\bibliography{excessive}

\appendix

\section{Polynomial Bases for Proposition~\ref{prop:genus7_special}}
\label{sec:bases}
For completeness, we give the bases of polynomials used to search for singular
plane curves of genus~$7$ as described by Proposition~\ref{prop:genus7_special}.
To unify the presentation, we formalize the properties these bases must have in 
order to efficiently construct curves that do not pass through a rational point 
of the plane.

\begin{formalism}
  \label{formal}
  Let $S_1, \ldots, S_5$ be five sets of homogeneous polynomials of degree~$9$
  in $\FF_2[x,y,z]$ that satisfy the following hypotheses\textup{:}
\begin{itemize}
\item At each of the three points $\triple{1}{0}{0}$, $\triple{0}{1}{0}$, and 
      $\triple{0}{0}{1}$, exactly one element of $S_1$ is nonzero. Each element
      of $S_i$ with $i > 1$ vanishes at these three points.
\item At the point $P = \triple{1}{1}{0}$, each polynomial in $S_2$ is nonzero. 
      Each polynomial in $S_3, S_4$, and $S_5$ vanishes at $P$. An even number
      of elements of $S_1$ are nonzero at~$P$.
\item At the point $P = \triple{1}{0}{1}$, each polynomial in $S_3$ is nonzero.
      Each polynomial in $S_2, S_4$, and $S_5$ vanishes at $P$. An even number
      of elements of $S_1$ are nonzero at~$P$.
\item At the point $P = \triple{0}{1}{1}$, each polynomial in $S_4$ is nonzero. 
      Each polynomial in $S_2, S_3$, and $S_5$ vanishes at $P$. If $S_4$ is 
      nonempty (resp., empty), an even (resp., odd) number of elements of $S_1$
      are nonzero at~$P$.
\item At the point $P = \triple{1}{1}{1}$, each polynomial in $S_i$ is nonzero
      for $i  = 2, 3, 4, 5$. If $S_4$ is nonempty (resp., empty), an odd (resp., 
      even) number of elements of $S_1$ are nonzero at~$P$.
\end{itemize}
Let $F_1$ be the sum of all polynomials in $S_1$. For $i = 2, 3, 4, 5$, let 
$F_i$ be the sum of an odd number of polynomials in $S_i$. If 
$S_4 = \varnothing$, we take $F_4 = 0$. Then $F = \sum F_i$ does not vanish at
any rational point of the plane.
\end{formalism}

\begin{remark}
  This formalism applies to our constructions in Sections~\ref{sec:genus6}
  and~\ref{sec:genus7_4}. For $g = 6$ and $g=7$, we used
  \begin{align*}
    S_1 &= \{x^{g+1}, y^{g+1}, z^{g+1}\} \\
    S_2 &= \{x^i y^{g+1-i} \colon 1 \le i \le g\} \\
    S_3 &= \{x^i z^{g+1-i} \colon 1 \le i \le g\} \\
    S_4 &= \{y^i z^{g+1-i} \colon 1 \le i \le g\} \\
    S_5 &= \{x^i y^j z^{g+1-i-j} \colon 1 \le i \le g-1,\ 1 \le j \le g-i\}. \\
  \end{align*}
\end{remark}


\subsubsection{Triple cubic point on a line}

In the language of Formalism~\ref{formal}, we take the following to be our basis
for the space of degree-$9$ homogeneous polynomials that vanish to order~$3$ at 
the cubic point $\triple{0}{1}{t}$:
\begin{align*}
S_1 &= \{x^9,\  (y^3 + y^2 z + z^3)^3\} \\
S_2 &= \{x^i y^{9-i} : 3 \le i \le 8\} \cup
       \{x^2 y^4 (y^3 + y^2 z + z^3), \ x y^2 (y^3 + y^2 z + z^3)^2\}\\
S_3 &= \{x^i z^{9-i} : 3 \le i \le 8\} \cup
       \{x^2 z (y^3 + y^2 z + z^3)^2, \ x z^2 (y^3 + y^2 z + z^3)^2\} \\
S_4 &= \varnothing\phantom{ \{z^{9-i}\}} \\
S_5 &= \{x^i y^j z^{9-i-j} : 3 \le i \le 7, \ 1 \le j \le 5 , \ i + j < 9\}\\
    & \qquad \cup \{x y z (y^3 + y^2 z + z^3)^2, \ x^2 y z (y^5 + y z^4 + z^5), \\ 
    & \qquad\qquad  x^2 y^2 z^2 (y^3 + y^2 z + z^3), \ x^2 y^3 z (y^3 + y^2 z + z^3)\}.
\end{align*}
One verifies directly that all of these polynomials vanish to order~$3$ at 
$\triple{0}{1}{t}$. Each polynomial in $S\colonequals S_1 \cup \cdots \cup S_5$, 
except for the element $x^2 y^2 z^2 (y^3 + y^2 z + z^3)$ of $S_5$, has the 
property that it contains a monomial that is not present in any other polynomial
in~$S$. Consequently, the full set of $37$ polynomials in $S$ must be linearly 
independent over~$\FF_2$.


\subsubsection{Double and single cubic points on a line}

In the language of Formalism~\ref{formal}, we take the following to be our basis
for the space of degree-$9$ homogeneous polynomials that vanish at the cubic
points $\triple{0}{1}{t}$ and $\triple{0}{1}{t+1}$, with order at least two at the former:
\begin{align*}
S_1 &= \{x^9, \ (y^3 + y z^2 + z^3) (y^3 + y^2 z + z^3)^2 \} \\
S_2 &= \{x^i y^{9-i} : 2\le i\le 8\} \cup \{x y^5 (y^3 + y^2 z + z^3) \}\\
S_3 &= \{x^i z^{9-i} : 2\le i\le 8\} \cup \{x z^2 (y^3 + y^2 z + z^3)^2 \}\\
S_4 &= \varnothing\phantom{ \{z^{9-i}\}} \\
S_5 &= \{x^i y^j z^{9-i-j} : 2\le i\le 7, \ 1\le j\le 6, \   i + j < 9 \} \\
    &\qquad \cup   \{x y z (y^3 + y^2 z + z^3)^2, x y^2 z (y^5 + y z^4 + z^5),\\
    &\qquad\qquad   x y^3 z^2 (y^3 + y^2 z + z^3), x y^4 z (y^3 + y^2 z + z^3)\}.
\end{align*}

One verifies directly that all of these polynomials vanish at $\triple{0}{1}{t}$ and
at $\triple{0}{1}{t+1}$ to the correct orders.  Each polynomial in 
$S\colonequals S_1 \cup \cdots \cup S_5$, except for 
$x y^4 z (y^3 + y^2 z + z^3)$ and $x y^3 z^2 (y^3 + y^2 z + z^3)$, has the 
property that it contains a monomial that is not present in any other polynomial
in~$S$. Therefore, every nontrivial linear relation among the polynomials in $S$
must involve only the two exceptional polynomials, and since those two 
polynomials are distinct, the full set of $43$ polynomials in $S$ must be 
linearly independent over~$\FF_2$.


\subsubsection{Cubic point on two distinct lines}

In the language of Formalism~\ref{formal}, we take the following to be our basis
for the space of degree-$9$ homogeneous polynomials that vanish at the cubic
points $\triple{0}{1}{t}$ and $\triple{1}{0}{t}$:
\begin{align*}
S_1 &= \{x^2  (x^7 + z^7) , \  y^2 (y^7 + z^7) , \ z^2 (x^7 + y^7 + z^7)\} \\
S_2 &= \{x^i y^{9-i} : 1 \le i \le 8 \}\\
S_3 &= \{x^2 z (x^3 + x^2 z + z^3)^2, \ x^2 z^2 (x^5 + x z^4 + z^5), \ x z^3 (x^5 + x z^4 + z^5), \\
    & \qquad x^3 z^3 (x^3 + x^2 z + z^3), \ x z^5 (x^3 + x^2 z + z^3)\}\\
S_4 &= \{y^2 z (y^3 + y^2 z + z^3)^2, \ y^2 z^2 (y^5 + y z^4 + z^5),  \ y z^3 (y^5 + y z^4 + z^5), \\
    & \qquad y^3 z^3 (y^3 + y^2 z + z^3), \ y z^5 (y^3 + y^2 z + z^3)\} \\
S_5 &= \{x^i  y^j  z^{9-i-j} : 1 \le i \le 7, \ 1 \le j \le 7, \  i + j < 9\}.
\end{align*}

One verifies directly that all of these polynomials vanish at $\triple{0}{1}{t}$ and
$\triple{1}{0}{t}$. Every polynomial in $S\colonequals S_1 \cup \cdots \cup S_5$,
except for those in 
\[
T\colonequals \{x z^3 (x^5 + x z^4 + z^5), \ x^2 z^2 (x^5 + x z^4 + z^5), \ 
                  y z^3 (y^5 + y z^4 + z^5), \ y^2 z^2 (y^5 + y z^4 + z^5)\},
\]
has the property that it contains a monomial that is not present in any other 
polynomial in~$S$. Therefore, if there is a nontrivial linear relation among the 
polynomials in $S$, it must involve only the polynomials in~$T$. But now we note
that every polynomial in $T$ includes a monomial that appears in no other 
element of $T$, so the elements of~$T$ --- and hence the elements of~$S$ ---
are linearly independent over~$\FF_2$.


\section{Isogeny Classes for Genus~7}
\label{sec:genus7_classes}

Here we collect the real Weil polynomials for potentially excessive curves of
genus~$7$ over $\FF_2$ that survived Lauter's algorithm; 
see Theorem~\ref{thm:Lauter_search}.

{\setlength{\LTcapwidth}{0.9\textwidth}
\renewcommand{\arraystretch}{1.1} 
\begin{longtable}{clrrrrrrr}
\caption{Real Weil polynomials for possible excessive curves of genus~$7$ over $\FF_2$ (as found by Lauter's algorithm) and the corresponding numbers of places.\label{table:genus7}}\\
\toprule
    No. & Factored real Weil polynomial & $a_1$ & $a_2$ & $a_3$& $a_4$& $a_5$& $a_6$& $a_7$  \\      
\midrule
\endfirsthead
\caption{(continued)}\\
\toprule
    No. & Factored real Weil polynomial & $a_1$ & $a_2$ & $a_3$& $a_4$& $a_5$& $a_6$& $a_7$  \\      
\midrule
\endhead
\bottomrule \endfoot
\pz1 & $T^{7} - 3 T^{6} - 12 T^{5} + 36 T^{4} + 44 T^{3} - 132 T^{2} - 44 T + 141$ & 0 & 0 & 0 & 8 & 0 & 4 & 9 \\
\pz2 & $T^{7} - 3 T^{6} - 12 T^{5} + 38 T^{4} + 34 T^{3} - 132 T^{2} + 13 T + 78$ & 0 & 0 & 2 & 4 & 0 & 6 & 1 \\
\pz3 & $T  (T^{6} - 3 T^{5} - 12 T^{4} + 39 T^{3} + 27 T^{2} - 126 T + 57)$ & 0 & 0 & 3 & 0 & 0 & 13 & 9 \\
\pz4 & $T^{7} - 3 T^{6} - 12 T^{5} + 40 T^{4} + 24 T^{3} - 132 T^{2} + 75 T + 1$ & 0 & 0 & 4 & 0 & 0 & 9 & 10 \\
\pz5 & $(T^{3} - 4 T^{2} + 3 T + 1)  (T^{4} + T^{3} - 11 T^{2} - 8 T + 25)$ & 0 & 0 & 4 & 1 & 0 & 5 & 6 \\
\pz6 & $(T^{3} - T^{2} - 7 T + 6)  (T^{4} - 2 T^{3} - 7 T^{2} + 14 T - 2)$ & 0 & 0 & 5 & 0 & 0 & 9 & 12 \\
\pz7 & $T^{7} - 3 T^{6} - 11 T^{5} + 33 T^{4} + 33 T^{3} - 99 T^{2} - 14 T + 52$ & 0 & 1 & 0 & 4 & 0 & 9 & 10 \\
\pz8 & $(T + 2)  (T^{6} - 5 T^{5} - T^{4} + 35 T^{3} - 36 T^{2} - 30 T + 38)$ & 0 & 1 & 0 & 5 & 0 & 4 & 10 \\
\pz9 & $T^{7} - 3 T^{6} - 11 T^{5} + 33 T^{4} + 35 T^{3} - 105 T^{2} - 25 T + 86$ & 0 & 1 & 0 & 6 & 0 & 4 & 11 \\
10   & $T^{7} - 3 T^{6} - 11 T^{5} + 33 T^{4} + 35 T^{3} - 105 T^{2} - 24 T + 83$ & 0 & 1 & 0 & 6 & 0 & 5 & 11 \\
11   & $(T^{3} - 2 T^{2} - 5 T + 8)  (T^{4} - T^{3} - 8 T^{2} + 4 T + 12)$ & 0 & 1 & 0 & 7 & 0 & 4 & 12 \\
12   & $T^{7} - 3 T^{6} - 10 T^{5} + 30 T^{4} + 24 T^{3} - 72 T^{2} + 11$ & 0 & 2 & 0 & 1 & 0 & 10 & 11 \\
13   & $(T^{2} - T - 5)  (T^{5} - 2 T^{4} - 7 T^{3} + 13 T^{2} + 3 T - 7)$ & 0 & 2 & 0 & 2 & 0 & 4 & 11 \\
14   & $T^{7} - 3 T^{6} - 10 T^{5} + 30 T^{4} + 25 T^{3} - 75 T^{2} - 3 T + 21$ & 0 & 2 & 0 & 2 & 0 & 9 & 12 \\
15   & $T^{7} - 3 T^{6} - 10 T^{5} + 30 T^{4} + 25 T^{3} - 75 T^{2} + 2 T + 7$ & 0 & 2 & 0 & 2 & 0 & 14 & 13 \\
16   & $(T - 1)  (T^{2} - T - 5)  (T^{4} - T^{3} - 8 T^{2} + 5 T + 9)$ & 0 & 2 & 0 & 3 & 0 & 3 & 12 \\
17   & $T^{7} - 3 T^{6} - 10 T^{5} + 30 T^{4} + 26 T^{3} - 78 T^{2} - 6 T + 31$ & 0 & 2 & 0 & 3 & 0 & 8 & 13 \\
18   & $T^{7} - 3 T^{6} - 10 T^{5} + 30 T^{4} + 26 T^{3} - 78 T^{2} - 5 T + 28$ & 0 & 2 & 0 & 3 & 0 & 9 & 13 \\
19   & $(T^{2} - T - 5)  (T^{5} - 2 T^{4} - 7 T^{3} + 13 T^{2} + 5 T - 11)$ & 0 & 2 & 0 & 4 & 0 & 2 & 13 \\
20   & $T^{7} - 3 T^{6} - 10 T^{5} + 30 T^{4} + 27 T^{3} - 81 T^{2} - 13 T + 52$ & 0 & 2 & 0 & 4 & 0 & 3 & 13 \\
21   & $T^{7} - 3 T^{6} - 10 T^{5} + 30 T^{4} + 27 T^{3} - 81 T^{2} - 9 T + 41$ & 0 & 2 & 0 & 4 & 0 & 7 & 14 \\
22   & $(T^{2} - 2 T - 2)  (T^{5} - T^{4} - 10 T^{3} + 8 T^{2} + 23 T - 19)$ & 0 & 2 & 0 & 4 & 0 & 8 & 14 \\
23   & $T^{7} - 3 T^{6} - 10 T^{5} + 30 T^{4} + 28 T^{3} - 84 T^{2} - 15 T + 59$ & 0 & 2 & 0 & 5 & 0 & 3 & 14 \\
24   & $T^{7} - 3 T^{6} - 10 T^{5} + 30 T^{4} + 28 T^{3} - 84 T^{2} - 12 T + 51$ & 0 & 2 & 0 & 5 & 0 & 6 & 15 \\
25   & $(T + 1)  (T^{3} - 2 T^{2} - 5 T + 8)^{2}$ & 0 & 2 & 0 & 6 & 0 & 4 & 16 \\
26   & $T^{7} - 3 T^{6} - 9 T^{5} + 27 T^{4} + 18 T^{3} - 54 T^{2} + T + 11$ & 0 & 3 & 0 & 0 & 0 & 7 & 14 \\
27   & $T  (T^{2} - T - 5)  (T^{4} - 2 T^{3} - 6 T^{2} + 11 T - 1)$ & 0 & 3 & 0 & 0 & 0 & 11 & 15 \\
28   & $T^{7} - 3 T^{6} - 9 T^{5} + 27 T^{4} + 19 T^{3} - 57 T^{2} - 2 T + 21$ & 0 & 3 & 0 & 1 & 0 & 5 & 15 \\
29   & $T^{7} - 3 T^{6} - 9 T^{5} + 27 T^{4} + 19 T^{3} - 57 T^{2} - T + 18$ & 0 & 3 & 0 & 1 & 0 & 6 & 15 \\
30   & $(T^{2} - T - 5)  (T^{5} - 2 T^{4} - 6 T^{3} + 11 T^{2} - 2)$ & 0 & 3 & 0 & 1 & 0 & 9 & 16 \\
31   & $T^{7} - 3 T^{6} - 9 T^{5} + 27 T^{4} + 19 T^{3} - 57 T^{2} + 3 T + 7$ & 0 & 3 & 0 & 1 & 0 & 10 & 16 \\
32   & $T^{7} - 3 T^{6} - 9 T^{5} + 27 T^{4} + 19 T^{3} - 57 T^{2} + 4 T + 4$ & 0 & 3 & 0 & 1 & 0 & 11 & 16 \\
33   & $(T^{3} - T^{2} - 5 T + 1)  (T^{4} - 2 T^{3} - 6 T^{2} + 10 T + 1)$ & 0 & 3 & 0 & 1 & 0 & 12 & 16 \\
34   & $(T - 1)  (T^{2} - 2 T - 2)  (T^{4} - 9 T^{2} - 2 T + 14)$ & 0 & 3 & 0 & 2 & 0 & 4 & 16 \\
35   & $T^{7} - 3 T^{6} - 9 T^{5} + 27 T^{4} + 20 T^{3} - 60 T^{2} - 3 T + 25$ & 0 & 3 & 0 & 2 & 0 & 5 & 16 \\
36   & $(T^{2} - T - 5)  (T^{5} - 2 T^{4} - 6 T^{3} + 11 T^{2} + T - 4)$ & 0 & 3 & 0 & 2 & 0 & 7 & 17 \\
37   & $T^{7} - 3 T^{6} - 9 T^{5} + 27 T^{4} + 20 T^{3} - 60 T^{2} + 17$ & 0 & 3 & 0 & 2 & 0 & 8 & 17 \\
38   & $T^{7} - 3 T^{6} - 9 T^{5} + 27 T^{4} + 20 T^{3} - 60 T^{2} + T + 14$ & 0 & 3 & 0 & 2 & 0 & 9 & 17 \\
39   & $T^{7} - 3 T^{6} - 9 T^{5} + 27 T^{4} + 20 T^{3} - 60 T^{2} + 2 T + 11$ & 0 & 3 & 0 & 2 & 0 & 10 & 17 \\
40   & $T^{7} - 3 T^{6} - 9 T^{5} + 27 T^{4} + 20 T^{3} - 60 T^{2} + 3 T + 8$ & 0 & 3 & 0 & 2 & 0 & 11 & 17 \\
41  & $(T^{3} - T^{2} - 6 T + 3)  (T^{4} - 2 T^{3} - 5 T^{2} + 7 T + 3)$ & 0 & 3 & 0 & 2 & 0 & 11 & 18 \\
42  & $T^{7} - 3 T^{6} - 9 T^{5} + 27 T^{4} + 20 T^{3} - 60 T^{2} + 4 T + 6$ & 0 & 3 & 0 & 2 & 0 & 12 & 18 \\
43  & $T^{7} - 3 T^{6} - 9 T^{5} + 27 T^{4} + 21 T^{3} - 63 T^{2} - 5 T + 32$ & 0 & 3 & 0 & 3 & 0 & 4 & 17 \\
44  & $(T - 1)  (T^{2} - 2 T - 2)  (T^{2} - T - 5)  (T^{2} + T - 3)$ & 0 & 3 & 0 & 3 & 0 & 5 & 18 \\
45  & $T^{7} - 3 T^{6} - 9 T^{5} + 27 T^{4} + 21 T^{3} - 63 T^{2} - 3 T + 27$ & 0 & 3 & 0 & 3 & 0 & 6 & 18 \\
46  & $T^{7} - 3 T^{6} - 9 T^{5} + 27 T^{4} + 21 T^{3} - 63 T^{2} - 2 T + 24$ & 0 & 3 & 0 & 3 & 0 & 7 & 18 \\
47  & $T^{7} - 3 T^{6} - 9 T^{5} + 27 T^{4} + 21 T^{3} - 63 T^{2} - T + 21$ & 0 & 3 & 0 & 3 & 0 & 8 & 18 \\
48  & $(T^{3} - 6 T - 3)  (T^{4} - 3 T^{3} - 3 T^{2} + 12 T - 6)$ & 0 & 3 & 0 & 3 & 0 & 9 & 18 \\
49  & $T^{7} - 3 T^{6} - 9 T^{5} + 27 T^{4} + 21 T^{3} - 63 T^{2} + 19$ & 0 & 3 & 0 & 3 & 0 & 9 & 19 \\
50  & $T^{7} - 3 T^{6} - 9 T^{5} + 27 T^{4} + 22 T^{3} - 66 T^{2} - 5 T + 34$ & 0 & 3 & 0 & 4 & 0 & 5 & 19 \\
51  & $T^{7} - 3 T^{6} - 9 T^{5} + 27 T^{4} + 22 T^{3} - 66 T^{2} - 4 T + 31$ & 0 & 3 & 0 & 4 & 0 & 6 & 19 \\
52  & $(T - 1)  (T + 2)  (T^{2} - 2 T - 2)  (T^{3} - 2 T^{2} - 5 T + 8)$ & 0 & 3 & 0 & 4 & 0 & 6 & 20 \\
53  & $(T - 1)  (T^{6} - 2 T^{5} - 10 T^{4} + 14 T^{3} + 28 T^{2} - 14 T - 11)$ & 0 & 4 & 0 & 0 & 0 & 7 & 20 \\
54  & $(T^{3} - T^{2} - 6 T + 4)  (T^{4} - 2 T^{3} - 4 T^{2} + 4 T + 2)$ & 0 & 4 & 0 & 0 & 0 & 8 & 20 \\
55  & $T^{7} - 3 T^{6} - 8 T^{5} + 24 T^{4} + 14 T^{3} - 42 T^{2} + 4 T + 9$ & 0 & 4 & 0 & 0 & 0 & 8 & 21 \\
56  & $T^{7} - 3 T^{6} - 8 T^{5} + 24 T^{4} + 14 T^{3} - 42 T^{2} + 5 T + 6$ & 0 & 4 & 0 & 0 & 0 & 9 & 21 \\
57  & $T^{7} - 3 T^{6} - 8 T^{5} + 24 T^{4} + 14 T^{3} - 42 T^{2} + 6 T + 3$ & 0 & 4 & 0 & 0 & 0 & 10 & 21 \\
58  & $T  (T^{6} - 3 T^{5} - 8 T^{4} + 24 T^{3} + 14 T^{2} - 42 T + 7)$ & 0 & 4 & 0 & 0 & 0 & 11 & 21 \\
59  & $(T - 1)  (T^{6} - 2 T^{5} - 10 T^{4} + 14 T^{3} + 29 T^{2} - 16 T - 13)$ & 0 & 4 & 0 & 1 & 0 & 7 & 22 \\
60  & $T^{7} - 3 T^{6} - 8 T^{5} + 24 T^{4} + 15 T^{3} - 45 T^{2} + 4 T + 10$ & 0 & 4 & 0 & 1 & 0 & 8 & 22 \\
61  & $T^{7} - 3 T^{6} - 8 T^{5} + 24 T^{4} + 15 T^{3} - 45 T^{2} + 4 T + 11$ & 0 & 4 & 0 & 1 & 0 & 8 & 23 \\
62  & $T^{7} - 3 T^{6} - 8 T^{5} + 24 T^{4} + 15 T^{3} - 45 T^{2} + 5 T + 7$ & 0 & 4 & 0 & 1 & 0 & 9 & 22 \\
63  & $T^{7} - 3 T^{6} - 8 T^{5} + 24 T^{4} + 15 T^{3} - 45 T^{2} + 5 T + 8$ & 0 & 4 & 0 & 1 & 0 & 9 & 23 \\
64  & $T^{7} - 3 T^{6} - 8 T^{5} + 24 T^{4} + 15 T^{3} - 45 T^{2} + 6 T + 4$ & 0 & 4 & 0 & 1 & 0 & 10 & 22 \\
65  & $(T^{2} - T - 5)  (T^{5} - 2 T^{4} - 5 T^{3} + 9 T^{2} - T - 1)$ & 0 & 4 & 0 & 1 & 0 & 10 & 23 \\
66  & $T^{7} - 3 T^{6} - 8 T^{5} + 24 T^{4} + 15 T^{3} - 45 T^{2} + 7 T + 1$ & 0 & 4 & 0 & 1 & 0 & 11 & 22 \\
67  & $T^{7} - 3 T^{6} - 8 T^{5} + 24 T^{4} + 15 T^{3} - 45 T^{2} + 7 T + 2$ & 0 & 4 & 0 & 1 & 0 & 11 & 23 \\
68  & $T  (T^{3} - 2 T^{2} - 5 T + 8)  (T^{3} - T^{2} - 5 T + 1)$ & 0 & 4 & 0 & 1 & 0 & 12 & 24 \\
69  & $T^{7} - 3 T^{6} - 8 T^{5} + 24 T^{4} + 16 T^{3} - 48 T^{2} + 3 T + 14$ & 0 & 4 & 0 & 2 & 0 & 7 & 23 \\
70  & $(T - 1)^{2}  (T^{2} - T - 5)  (T^{3} - 6 T - 3)$ & 0 & 4 & 0 & 2 & 0 & 7 & 24 \\
71  & $T^{7} - 3 T^{6} - 8 T^{5} + 24 T^{4} + 16 T^{3} - 48 T^{2} + 4 T + 11$ & 0 & 4 & 0 & 2 & 0 & 8 & 23 \\
72  & $(T^{3} - 2 T^{2} - 4 T + 6)  (T^{4} - T^{3} - 6 T^{2} + 2 T + 2)$ & 0 & 4 & 0 & 2 & 0 & 8 & 24 \\
73  & $T^{7} - 3 T^{6} - 8 T^{5} + 24 T^{4} + 16 T^{3} - 48 T^{2} + 4 T + 13$ & 0 & 4 & 0 & 2 & 0 & 8 & 25 \\
74  & $(T + 2)  (T^{6} - 5 T^{5} + 2 T^{4} + 20 T^{3} - 24 T^{2} + 5)$ & 0 & 4 & 0 & 2 & 0 & 9 & 25 \\
75  & $(T - 1)  (T^{6} - 2 T^{5} - 9 T^{4} + 12 T^{3} + 23 T^{2} - 10 T - 1)$ & 0 & 5 & 0 & 0 & 0 & 10 & 28 \\
76  & $T^{7} - 3 T^{6} - 7 T^{5} + 21 T^{4} + 11 T^{3} - 33 T^{2} + 9 T + 2$ & 0 & 5 & 0 & 0 & 0 & 10 & 29 \\
77  & $(T - 1)  T  (T + 2)  (T^{2} - 3 T + 1)  (T^{2} - T - 5)$ & 0 & 5 & 0 & 0 & 0 & 11 & 30 \\
78  & $T^{7} - 3 T^{6} - 7 T^{5} + 21 T^{4} + 11 T^{3} - 33 T^{2} + 10 T + 1$ & 0 & 5 & 0 & 0 & 0 & 11 & 31 \\
79  & $(T - 1)  (T^{3} - T^{2} - 5 T + 1)^{2}$ & 0 & 5 & 0 & 0 & 0 & 12 & 32 \\
\end{longtable}
}

\end{document}